\documentclass[reqno]{amsart}
\usepackage{amsmath,amsthm,amsfonts,amssymb}
\RequirePackage{graphicx}
\numberwithin{equation}{section}
\theoremstyle{plain}

\newtheorem{theorem}{Theorem}

\newtheorem{proposition}[theorem]{Proposition}

\newtheorem{remark}[theorem]{Remark}

\begin{document}

\title[Phase transition curve in ERGM]{On the phase transition curve in a directed 
exponential random graph model}

\author{David Aristoff}
\address
{Department of Mathematics \newline
\indent Colorado State University \newline
\indent 1874 Campus Delivery \newline
\indent Fort Collins, CO-80523 \newline
\indent United States of America}
\email{
aristoff@math.colostate.edu}

\author{Lingjiong Zhu}
\address
{Department of Mathematics \newline
\indent Florida State University \newline
\indent 1017 Academic Way \newline
\indent Tallahassee, FL-32306 \newline
\indent United States of America}
\email{
zhu@math.fsu.edu}

\date{26 October 2017.}

\subjclass[2010]{05C80, 82B26}  
\keywords{dense random graphs, exponential random graphs, graph limits, entropy, phase transitions.}

\begin{abstract}
We consider a family of directed exponential random graph models
parametrized by edges and outward stars. Much 
of the important  
statistical content of such models is given by the 
normalization constant of the models, 
and in particular, an appropriately 
scaled limit of the normalization, 
which is called the {\em free energy}.  
We derive precise asymptotics for 
the normalization constant for {\em finite} graphs.  
We use this to derive a formula 
for the free energy. 
The limit is analytic everywhere except along a curve corresponding 
to a first order phase 
transition. We 
examine unusual 
behavior of the model along the phase transition curve.
\end{abstract}

\maketitle

\section{Introduction}

Probabilistic ensembles with one or more adjustable parameters
are often used to model complex networks, including social networks, 
biological networks, the Internet, etc.; see 
Fienberg \cite{FienbergI,FienbergII}, 
Lov\'{a}sz \cite{Lovasz2009}
and Newman \cite{Newman}. 
One of the standard complex network models 
are the {\em exponential random graph models} (ERGMs), originally
studied by Besag \cite{Besag}. 
We refer to Snijders et al. \cite{Snijders}, Rinaldo et al. \cite{Rinaldo} and
Wasserman and Faust \cite{Wasserman} for history and 
a review of recent developments.

Much of the statistical content of such models can be described by the probability normalization. 
An appropriately scaled limit of the 
normalization, which is called the {\em free energy}, is 
useful for understanding properties of 
large graphs sampled from ERGMs. The word free 
energy comes from an analogous quantity 
in statistical physics; see Section~\ref{1.2} below. 
In that setting, the free energy 
is used to draw phase diagrams corresponding 
(for example) to the familiar fluid, liquid and solid phases of matter~\cite{FisherRadin}. 
In the random graph setting, it has 
recently been used 
to understand asymptotic behavior of ERGMs, 
including singular behavior, 
{\em e.g.}, ill-posedness of 
parameter fitting 
problems~\cite{ChatterjeeDiaconis}. 

The study of the free energy in ERGMs 
dates to Park and Newman~\cite{ParkI,ParkII}, 
who used mean field and other 
non-rigorous approximations. 
For early history, see also the references in H\"{a}ggstr\"{o}m
and Jonasson~\cite{Haggstrom}. 
The first rigorous study appeared in 
Chatterjee and Diaconis~\cite{ChatterjeeDiaconis}, 
who used a large deviation approach~\cite{ChatterjeeVaradhan}. 
Radin and Yin~\cite{Radin} used the work 
of Chatterjee and Diaconis to 
formalize the notion of phases 
for ERGMs, explicitly computing phase diagrams 
for a family of two-parameter models. 
A similar three-parameter 
family was studied by Yin~\cite{Yin}.

We consider a family of directed 
exponential random graphs parametrized by 
edges and outward directed $p$-stars. 
Versions 
of this model in which 
edges and outward directed $p$-stars are 
held fixed, instead of 
controlled by a parameter, 
were studied in~\cite{AristoffZhu}. 
Such models are standard
and important in the literature of social 
networks, see e.g. Holland~\cite{Holland}, Mele~\cite{Mele} and 
the references therein. 
Many complex networks have directed structures.
Examples include email networks and social networks.
In email networks, just because user Bob's email address appears in user Alice's address book 
does not necessarily mean that the reverse is also true, although it often is, see e.g. Newman et al. \cite{NewmanII}.
In social networks, when two users Alice and Bob interact
as peers, one expects that messages will be exchanged between them in both directions.
However, if user Alice sends messages to user Bob, who is a celebrity or news source, it is likely
that user Bob will not send messages in return, see e.g. Cheng et al. \cite{Cheng}.
In this paper, we consider the statistics of edges and outward directed $p$-stars
for two main reasons. First, this is the model that is more analytically tractable than
the more general directed ERGMs, and the main results we will obtain in this paper
rely on the special properties of this model.
Second, our directed ERGM with edge and outward directed $p$-stars
has a nice microeconomic interpretation, which can be viewed as the long-run equilibrium
of a game in which the players are maximizing their own utilities. 
We will describe this in more details in the Section~\ref{NetworkSec}.

For directed 
graphs, the results of Chatterjee 
and Diaconis~\cite{ChatterjeeDiaconis} 
and Radin and Yin~\cite{Radin} do 
not directly apply, as the large 
deviation techniques they used have been developed 
only for undirected graphs.
Instead of adapting these techniques 
to the directed case, we use more
direct methods 
which lead to more precise asymptotics. 
In particular, we are able to completely 
characterize the phase behavior of 
our models. The phase diagram we find, 
with a first order phase
transition and a critical point, is  
nearly identical to what has been 
found in the undirected case~\cite{Radin}. 
But because our 
asymptotics are more precise, we can go beyond results of the 
type in~\cite{Radin} 
by studying the 
phase transition curve itself. 

The limitation of our model is that it studies the edge and outward directed $p$-stars, 
rather than the more general statistics in the more general directed ERGMs. Our method
and results cannot be applied to the undirected ERGM either. It would be certainly
be very interesting mathematically to pursue the similar results as obtained in this paper
for the more general directed ERGMs or undirected ERGMs, although it might be a big challenge
and an open problem in terms of mathematics. We study our particular model because of its
analytical tractability and exact solvability. At least for this special model, we can study
and understand the precise asymptotics rigorously and completely.

\subsection{The model} 
We consider the following exponential random graph 
model. Fix $p \ge 2$. For a 
directed graph $X$ on $n$ 
nodes and real parameters $\beta_1,\beta_2$, define
\begin{equation}\label{ERGM1}
{\mathbb P}_{n,\beta_1,\beta_2}(X) \sim 
\exp\left(\beta_1 E(X) + \frac{p!\beta_2}{n^{p-1}}S(X)\right), 
\end{equation}
where $E(X):=\sum_{i,j}X_{ij}$ 
and $S(X):=\frac{1}{p!}\sum_{i,j_{1},\ldots,j_{p}}X_{ij_{1}}X_{ij_{2}}\cdots X_{ij_{p}}$ are, 
respectively, 
the number of directed edges and 
outward directed $p$-stars in $X$, 
and $\sim$ denotes equality 
up to a normalization constant. 
We consider only directed graphs 
without duplicate edges, though loops 
will be allowed.

The following reformulation 
of~\eqref{ERGM1} will be useful. A 
simple directed graph $X$ on $n$ nodes 
is given by its adjacency matrix 
$X = (X_{ij})_{1\le i,j \le n}$ with each $X_{ij} \in \{0,1\}$. 
Here, $X_{ij} = 1$ means there is a directed edge from node $i$ to 
node $j$; otherwise, $X_{ij}=0$. 
Note that we allow $X_{ii} = 0$ or $1$, 
corresponding to the absence or 
presence, respectively, 
of a loop at node $i$. Define
\begin{align}\begin{split}\label{es}
&e(X) = n^{-2}\sum_{1\le i,j\le n} X_{ij},\\
&s(X) = n^{-p-1} \sum_{1\leq i,j_{1},j_{2},\ldots,j_{p}\leq n}X_{ij_{1}}X_{ij_{2}}\cdots X_{ij_{p}}.
\end{split}
\end{align}
Observe that 
\begin{equation*}
e(X) = n^{-2}E(X), \qquad s(X) = p!n^{-p-1}S(X).
\end{equation*}
(Note that 
we have allowed loops to contribute 
to $p$-stars; this is a minor 
point because the number of loops 
is of lower order than the number 
of directed edges.)
We think of $e(X)$ and $s(X)$ 
as homomorphism densities. 
That is, $e(X)$ is
the probability 
that a random function from a
directed edge into $X$ is a homomorphism, 
{{\em i.e.}}, an edge preserving map between the vertex sets. 
Similarly, $s(X)$ is
the probability 
that a random function from an
outward directed $p$-star into $X$ is a homomorphism. See~\cite{ChatterjeeDiaconis} for more 
details. With 
this notation, we rewrite~\eqref{ERGM1} 
as
\begin{equation}\label{ERGM}
{\mathbb P}_{n,\beta_1,\beta_2}(X) = Z_n(\beta_{1},\beta_{2})^{-1}\exp\left[n^2\left(\beta_{1} e(X) + \beta_{2} s(X)\right)\right],
\end{equation}
with 
$Z_n(\beta_1,\beta_2)$ the 
normalization constant. We 
will study  
\begin{equation*}
\psi_n(\beta_1,\beta_2) = n^{-2}\log Z_n(\beta_1,\beta_2)
\end{equation*}
as well as its limit
\begin{equation*}
\psi(\beta_1,\beta_2) := 
\lim_{n\to \infty} \psi_n(\beta_1,\beta_2).
\end{equation*}
We refer to $\psi(\beta_1,\beta_2)$ 
as the {\em free energy}. It is 
important for understanding 
the structure and statistical 
properties of the model. In 
particular, first order partial derivatives 
of $\psi$ with respect 
to $\beta_1$ and $\beta_2$ 
correspond to the limiting edge and star
densities in the model. Similarly, 
second order partial derivatives correspond 
to limiting edge and star variances. 
Consequently, singularities of $\psi(\beta_1,\beta_2)$ correspond 
to singular behavior in the model 
as the parameters vary. For instance, a singularity in a first order 
partial derivative of $\psi$ corresponds 
to a jump discontinuity of the limiting 
edge/star densities as $\beta_1,\beta_2$ 
vary across the singularity. 

We 
find below that the first 
derivative of $\psi$ is singular 
along a certain curve in the $(\beta_1,\beta_2)$ plane. This curve has an endpoint, at which the 
second derivative of $\psi$ is singular. 
A similar singularity has been 
found in the undirected version of the model; see 
Radin and Yin~\cite{Radin}. 
Our results are novel because, in 
contrast with the undirected case, we are able to obtain sharp asymptotics 
for $\psi_n$ and its partial 
derivatives at {\em finite $n$}. 
This allows us to make precise statements about the 
nature of the singularity in the model. 
In particular, we can describe the
scaling of edge and star variances 
along the singularity.
We explore 
this in detail in Sections~\ref{1.2} and~\ref{1.3}
by using an analogy with 
the grand canonical ensemble 
in statistical physics, an exponential family similar 
to~\eqref{ERGM}. See also Radin and Yin~\cite{Radin} for a similar discussion.

\subsection{Network formation}\label{NetworkSec}

In this section, let us consider a microeconomic model of network formation
that will be seen to converge in the long-run to the directed ERGM model
that we proposed. Similar network formation models in economics literature which
convergence to the equilibrium of ERGMs can be found in 
e.g. Mele \cite{Mele}, Chandrasekhar and Jackson \cite{CJ}, Badev \cite{Badev}.
 
Consider $n$ players and if there is a link from player $i$ to player $j$, 
we have $X_{ij}=1$ and it is $0$ otherwise. For simplicity, we assume $X_{ii}=0$ always. 
The link from player $i$ to player $j$ can be interpreted as an email message, or the invitation to an event
in the social networks. 
Notice that an email message may not get replied, and
an invitation may not get accepted. Thus, it fits into
the directed network setting.
For player $i$, we define his/her utility function as
\begin{equation*}
u_{i}(X)=\sum_{j=1}^{n}\beta_{1}X_{ij}+\sum_{1\leq j_{1},j_{2},\ldots,j_{p}}\frac{\beta_{2}}{n^{p-1}}X_{ij_{1}}\cdots X_{ij_{p}}.
\end{equation*}
In this setting, the player $i$ will have an incentive $\beta_{1}$ to form a link to player $j$
for any other player $j$, and will have an incentive $\frac{\beta_{2}}{n^{p-1}}$
to form links simultaneously to $p$ players $j_{1},\ldots,j_{p}$. 
Let us suppose $\beta_{1}$ is positive and $\beta_{2}$ is negative.
That means there is positive incentive to invite/message your friends, but there is negative
incentive to invite/message $p$ friends or more simultaneously. 
That can be explained the incentive to be sociable but not overly sociable.  

Over a long-period of time interval, we assume that at each (discrete) time, a player $i$ updates $X_{ij}$, $j\neq i$
to maximize his/her utility, and before the player $i$ updates the links, he/she receives
an idiosyncratic shock to his/her preferences that the econometrician cannot
observe.  The shocks are assumed to be i.i.d. logistic shocks among players and across time, 
which is a standard assumption in economics and statistics, see e.g. \cite{Train}.
Under these assumptions, the network formation process evolves according to a Markov chain 
which is irreducible and aperiodic and hence as time goes to infinity, it converges to
an equilibrium with a stationary distribution.
In the absence of random shocks, the network formation process will converge
to a Nash network as time goes to infinity, where a Nash network is a network 
in which player has no profitable deviations from his/her current linking strategy.
The random shock models unobservables that could influence the utility of additional
links, see e.g. \cite{Mele}. As time evolves, the network will converge to an equilibrium (see e.g. \cite{Mele}, \cite{CJ},
and the derivations follow the same arguments in Appendix A in \cite{Mele}) 
in which the stationary probability of observing a particular network configuration $X$ is given precisely by
\begin{equation*}
Z_n(\beta_{1},\beta_{2})^{-1}\exp\left[\beta_{1}\sum_{i,j}X_{ij} 
+ \frac{\beta_{2}}{n^{p-1}}\sum_{i,j_{1},\ldots,j_{p}}X_{ij_{1}}\cdots X_{ij_{p}}\right],
\end{equation*}
which is the directed ERGM model we defined in \eqref{ERGM}, 
where the exponent 
\begin{equation*}
\beta_{1}\sum_{i,j}X_{ij} 
+ \frac{\beta_{2}}{n^{p-1}}\sum_{i,j_{1},\ldots,j_{p}}X_{ij_{1}}\cdots X_{ij_{p}}
\end{equation*}
is known as the potential function in the economics literature, which is the combined utility
of $n$ players.

For econometricians, it is crucial to understand how to estimate the parameters $\beta_{1},\beta_{2}$
from the real world data.
Notice that the normalizing constant $Z_{n}(\beta_{1},\beta_{2})$ and hence $\psi_{n}(\beta_{1},\beta_{2})$ depend 
on the parameters $\beta_{1},\beta_{2}$ to be estimated, which brings a challenge to the MLE method.
MCMC method was proposed for the estimation, see e.g. \cite{Snijders2002}. But the MCMC method becomes
computationally expensive for large network size $n$. The alternative approach developed in recent years
is the variational inference, by directly computing and analyzing the constant $\psi_{n}(\beta_{1},\beta_{2})$
as $n\rightarrow\infty$, see e.g. \cite{MeleZhu}, which will be the focus for the rest of this paper.

\subsection{The grand canonical ensemble 
and phase transitions}\label{1.2}

To explain how our results fit into the 
phase diagram framework of~\cite{Radin}, we compare our model~\eqref{ERGM} with
the grand canonical ensemble 
from statistical physics,
which describes the statistical properties of matter in 
thermal equilibrium~\cite{Gallavotti}. 
We consider the grand canonical ensemble defined by, for $Y \subset [-n/2,n/2]^d$ with $d=2$ or $3$,
\begin{equation}\label{grandcanon}
{\mathbb P}_{n,\beta,\mu}(Y) = Z_n(\beta,\mu)^{-1}\exp\left(n^d\left[\beta \mu {\mathcal N}(Y) - \beta {\mathcal E}(Y)\right]\right),
\end{equation}
where $\mu$ is chemical 
potential, and $\beta = 1/(k_BT)$ with 
$T$ temperature and
$k_B$ Boltzmann's constant. 
Here $Z_n(\beta,\mu)$ is the 
normalization constant. Each element of $Y$ represents 
a particle, with 
${\mathcal N}(Y) = |Y|/n^d$ the density of $Y$,
and ${\mathcal E}(Y)$ 
the energy per volume of $Y$. 
A standard fact in statistical physics is that average physical properties of the model can 
be obtained from
\begin{equation*}
\psi_n(\beta,\mu):= n^{-d} \log Z_n(\beta,\mu).
\end{equation*}
In particular, the average and variance of 
${\mathcal N}(Y)$ and ${\mathcal E}(Y)$, or more generally, all 
of their moments, can be obtained 
by differentiating $\psi_n(\beta,\mu)$ with 
respect to $\beta$ or $\mu$. Usually, $n$ is very large and it is appropriate to consider
\begin{equation*}
\psi(\beta,\mu) := \lim_{n\to \infty} \psi_n(\beta,\mu),
\end{equation*}
which exists under
appropriate conditions on ${\mathcal E}$. 
One utility of this limit is that
\begin{equation}\label{switch}
\lim_{n\rightarrow\infty}\frac{\partial^{i+j}}{\partial\beta^{i}\partial\mu^{j}}
\psi_{n}(\beta,\mu)
=\frac{\partial^{i+j}}{\partial\beta^{i}\partial\mu^{j}}
\lim_{n\rightarrow\infty}\psi_{n}(\beta,\mu)
=\frac{\partial^{i+j}}{\partial\beta^{i}\partial\mu^{j}}
\psi(\beta,\mu)
\end{equation}
whenever $i,j$ are such that the derivative on the right hand side exists~\cite{Yang}. This means 
in the limit $n \to \infty$, moments of ${\mathcal N}(Y)$ and ${\mathcal E}(Y)$ 
can be computed directly from 
$\psi(\beta,\mu)$, {provided 
the appropriate partial derivatives 
of} $\psi(\beta,\mu)$ exist.

The limit $\psi(\beta,\mu)$ is key to understanding phases of matter. In particular, when ${\mathcal E}$ is 
suitably chosen, 
$\psi(\beta,\mu)$ 
is analytic except along two curves with an endpoint.
These curves correspond to the physical
transitions between solid, liquid 
and vapor phases, colloquially 
known as freezing, melting, boiling and sublimating.
The endpoint of these curves is called the {critical point}~\cite{FisherRadin}. 
See Figure 1(i). These transitions 
are {\em first order}, meaning 
the first derivative of the free 
energy has a jump discontinuity 
across the transition curve. 
On the phase transition 
curve there are  
coexisting phases of high and low 
density, and thus a nonvanishing 
variance of ${\mathcal N}(Y)$ and ${\mathcal E}(Y)$
in the limit $n \to \infty$. 

Unfortunately, rigorous analysis 
of $\psi(\beta,\mu)$ is difficult. Though the statements in the previous paragraph
are widely believed and supported by numerical experiments,  
proofs are possible only in very special 
cases~\cite{Lebowitz}. 
On the other hand, analysis of 
the ERGM free energy $\psi(\beta_1,\beta_2)$ 
is relatively tractable. Indeed, we show that 
$\psi(\beta_1,\beta_2)$ exhibits behavior 
very similar to what is conjectured 
for the grand canonical free energy $\psi(\beta,\mu)$.

\begin{figure}\label{fig1}
\begin{center}
\includegraphics[scale=0.7]{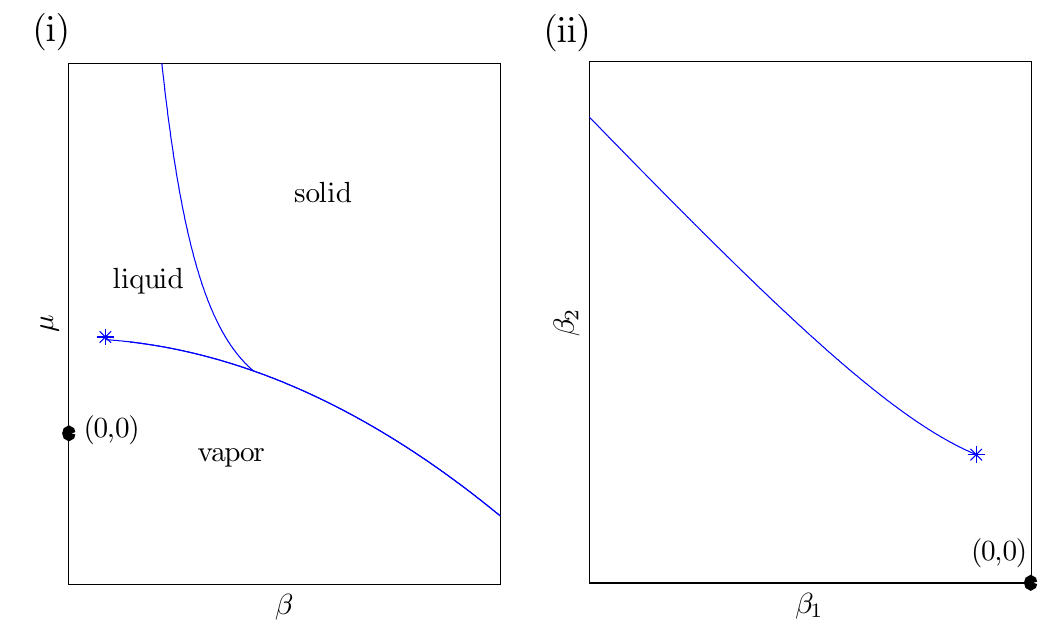}
\end{center}
\caption{Simple phase diagrams in (i) the grand canonical ensemble, 
and (ii) the ERGM model. The critical point is labeled 
with a $*$.}
\end{figure}

\subsection{Singularities of the ERGM free energy}\label{1.3}
We show that $\psi(\beta_1,\beta_2)$ 
is analytic except along a certain 
curve, which we call the {\em 
phase transition curve}. The 
curve has an endpoint, which 
we call the {\em critical point}. 
We prove that  
on the phase transition curve but away from the critical point, 
the first order partial derivatives of $\psi(\beta_1,\beta_2)$ have a jump 
discontinuity. 
Moreover, at the critical point, the first order partial 
derivatives of $\psi(\beta_1,\beta_2)$ are continuous but the second 
order derivatives diverge. 
Precisely the same behavior is 
believed to occur on the 
liquid-vapor transition curve in the  
grand canonical ensemble.
See Figure~1.

To understand these singularities better, consider the 
following. Just as in~\eqref{switch},
\begin{align}\begin{split}\label{switch2}
&\lim_{n\to \infty}\frac{\partial}{\partial \beta_i} \psi_n(\beta_1,\beta_2)
= \frac{\partial}{\partial \beta_i}  \psi(\beta_1,\beta_2), \quad 
i \in \{1,2\}\\
&\lim_{n\to \infty}\frac{\partial^2}{\partial \beta_i \partial \beta_j} \psi_n(\beta_1,\beta_2)
= \frac{\partial^2}{\partial \beta_i\partial \beta_j}  \psi(\beta_1,\beta_2), \quad i,j\in\{1,2\},\end{split}
\end{align}
if the derivatives on the right hand side exist. 
The commuting of limits follows from very 
general arguments of Yang and Lee~\cite{Yang}. 
Though their proof is written in the 
statistical mechanics setting of 
Section~\ref{1.2}, their 
arguments go  
through without any difficulties in 
our case. See also Radin and Yin~\cite{Radin} for remarks on this issue in the undirected graph setting.

Next, from simple computations,
\begin{align}\begin{split}\label{derivs2}
&\quad\frac{\partial}{\partial \beta_1} \psi_n(\beta_1,\beta_2) 
= {\mathbb E}_n[e(X)],\quad \frac{\partial}{\partial \beta_2} \psi_n(\beta_1,\beta_2) 
= {\mathbb E}_n[s(X)]\\
&\quad \frac{\partial^2}{\partial \beta_1^2} \psi_n(\beta_1,\beta_2) 
= n^2{\hbox{Var}_n}(e(X)), \quad \frac{\partial^2}{\partial \beta_2^2} \psi_n(\beta_1,\beta_2) = n^2{\hbox{Var}_n}(s(X)) \\
&\frac{\partial}{\partial \beta_1\partial \beta_2} \psi_n(\beta_1,\beta_2) 
= 
\frac{\partial}{\partial \beta_2\partial \beta_1} \psi_n(\beta_1,\beta_2) 
= n^2{\hbox{Cov}_n}(e(X),s(X)) .
\end{split}
\end{align}
Thus, a jump discontinuity in $\partial\psi(\beta_1,\beta_2)/\partial \beta_1$ (resp. $\partial\psi(\beta_1,\beta_2)/\partial \beta_2$)
along the transition curve implies 
a jump in the average value of $e(X)$ (resp. $s(X)$) 
across the curve in the limit $n\to \infty$.
Similarly, at the critical point, divergence 
of $\partial^2\psi(\beta_1,\beta_2)/\partial \beta_1^2$ (resp. $\partial^2\psi(\beta_1,\beta_2)/\partial \beta_2^2$)
implies that the variance of $e(X)$ (resp. $s(X)$) 
decays more slowly than 
$n^{-2}$. Away from the transition curve, all partial 
derivatives of $\psi(\beta_1,\beta_2)$ of all orders 
exist and are finite, so in particular 
the variance of $e(X)$ and $s(X)$ decays at least as fast as 
$n^{-2}$.  More detailed statements 
would require an analysis $\psi_n(\beta_1,\beta_2)$ for {\em finite} $n$; this is 
much more difficult to study than the limit 
$\psi(\beta_1,\beta_2)$. See~\cite{ChatterjeeDiaconis,Radin} for 
an analysis of the free energy in the undirected 
version of the model.

In the context of social networks (Section \ref{NetworkSec}), 
the quantities computed in \eqref{derivs2} reveal the statistics of the networks
at its long-run equilibrium. For instance, $\mathbb{E}_{n}[e(X)]$
denotes the average number of outward links from an average player. 
The existence of phase transitions tells us that as
the direct benefit $\beta_{1}$ and $\beta_{2}$ a player receives in his/her utility 
vary smoothly, many important statistics of the network, and hence the network structure 
at its long-run equilibrium may vary non-smoothly.

Our results rely on new and 
precise asymptotics for $\psi_n(\beta_1,\beta_2)$ and its partial derivatives. 
We use our asymptotics to calculate $\psi(\beta_1,\beta_2)$ and the scaling of the 
variance/covariance of 
$e(X)$ and $s(X)$ on the phase 
transition curve. The formula for $\psi(\beta_1,\beta_2)$ 
resembles the analogous 
free energy in the undirected 
version of the model. However, 
the behavior of our model 
along the phase transition 
curve is qualitatively different from 
the undirected case; see the 
discussion after Theorem~\ref{covariance} below.

The study of 
scaling on the transition curve has 
not 
been done before, in both the directed and 
undirected versions of the model. 
(This is because the scaling cannot 
be computed from $\psi(\beta_1,\beta_2)$ alone, 
as~\eqref{switch2} does not hold along the 
phase transition curve.)
We find that the variances 
of $e(X)$ and $s(X)$ vanish on the 
transition curve as $n \to \infty$, 
while 
the edge probability 
between fixed nodes is a 
Bernoulli random variable
whose parameter 
is a {convex combination} 
of the expected values of $e(X)$ just 
above and below the
curve. Combining these results, 
we show below that large graphs 
do not look like Erd\H{o}s-R\'{e}nyi 
random graphs with a binary 
distributed parameter -- 
that is, the first order phase transition 
does not correspond to phase 
coexistence in the usual sense. 
This is 
unexpected in light of the statistical 
physics analogy above. 
See the discussion 
below Theorem~\ref{marginaldensities}.

The remainder of this paper is organized as follows. 
Main results are stated in Section~\ref{THEOREMS}. 
The results are obtained by estimates, stated 
in Section~\ref{ESTIMATES}, which allow 
for a precise computation of $\psi_n(\beta_1,\beta_2)$ and derivatives thereof. 
All proofs are in Section~\ref{PROOFS}.

\section{Notation and results}\label{THEOREMS}

Our main results rely on the following trick. Observe that we can rewrite 
\begin{equation*}
e(X) = n^{-2} \sum_{i=1}^n\left(\sum_{j=1}^n X_{ij}\right), \qquad s(X) = n^{-p-1}\sum_{i=1}^n\left(\sum_{j=1}^n X_{ij}\right)^p,
\end{equation*}
where $\sum_{j=1}^n X_{ij}$, 
$i=1,\ldots,n$, are independent 
random variables. 
Thus, we can calculate $Z_n(\beta_1,\beta_2)$,
and hence also 
$\psi_n(\beta_1,\beta_2)$, as  
``one-dimensional'' objects. See 
Proposition~\ref{Zn} below for details.
Our analysis will depend heavily on the function
\begin{equation*}
\ell(x) := \beta_{1} x + \beta_{2} x^p-x \log x - (1-x)\log (1-x).
\end{equation*}
It is easy to see that $\ell$ is analytic 
in $(0,1)$ and continuous on $[0,1]$. 
Note that $\ell$ is essentially identical to the 
function of the same name studied in~\cite{Radin}: 
after multiplying $\beta_1$ and $\beta_2$ by two 
the functions differ only by a constant. This 
allows us to use results from~\cite{Radin} 
concerning $\ell$.

Our first result is the following formula for the 
free energy.
\begin{theorem}\label{free_energy}
 For any $\beta_1$, $\beta_2$, as $n \to \infty$ we have
\begin{equation*}
\psi_n(\beta_1,\beta_2) = \max_{x \in [0,1]} \ell(x)+O(n^{-1}\log n).
\end{equation*}
\end{theorem}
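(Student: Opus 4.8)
The plan is to use the fact that, under the uniform counting measure on directed graphs, the rows of $X$ are mutually independent, which collapses $Z_n$ to a one-dimensional sum. Writing $d_i := \sum_{j=1}^n X_{ij}$ for the out-degree of node $i$, the identity $s(X) = n^{-p-1}\sum_i d_i^p$ together with $e(X) = n^{-2}\sum_i d_i$ gives $n^2\bigl(\beta_1 e(X) + \beta_2 s(X)\bigr) = \sum_{i=1}^n\bigl(\beta_1 d_i + \beta_2 n^{1-p} d_i^p\bigr)$, so that
\begin{equation*}
Z_n(\beta_1,\beta_2) = \prod_{i=1}^n \sum_{X_{i1},\dots,X_{in}\in\{0,1\}} \exp\bigl(\beta_1 d_i + \beta_2 n^{1-p} d_i^p\bigr) = \left(\sum_{k=0}^n \binom{n}{k}\exp\bigl(\beta_1 k + \beta_2 n^{1-p} k^p\bigr)\right)^{\!n}.
\end{equation*}
Hence $\psi_n(\beta_1,\beta_2) = n^{-1}\log \sum_{k=0}^n \binom{n}{k}\exp\bigl(\beta_1 k + \beta_2 n^{1-p} k^p\bigr)$, and the theorem reduces to showing that this sum equals $\exp\bigl(n\ell(x^*) + O(\log n)\bigr)$, where $x^*$ denotes a maximizer of $\ell$ on $[0,1]$.

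For the second step I would substitute $k = nx$ and apply Stirling's formula in the uniform form $\log\binom{n}{k} = n\bigl(-x\log x - (1-x)\log(1-x)\bigr) + O(\log n)$, valid for $1 \le k \le n-1$ with an error uniform in $k$; the endpoints $k = 0$ and $k = n$ give $\binom{n}{k} = 1$ and are themselves grid points at which $\ell$ is continuous, so they cause no trouble. Since $\beta_1 k + \beta_2 n^{1-p} k^p = n\bigl(\beta_1 x + \beta_2 x^p\bigr)$ \emph{exactly}, the $k$-th summand is $\exp\bigl(n\ell(k/n) + O(\log n)\bigr)$ with uniform error. A standard Laplace-type bound then finishes the estimate: the sum of these $n+1$ positive terms lies between its largest term and $n+1$ times its largest term, so taking logarithms,
\begin{equation*}
n\max_{0\le k\le n}\ell(k/n) - C\log n \;\le\; \log\sum_{k=0}^n \binom{n}{k}\exp\bigl(\beta_1 k + \beta_2 n^{1-p} k^p\bigr) \;\le\; n\max_{0\le k\le n}\ell(k/n) + C\log n
\end{equation*}
for a constant $C = C(\beta_1,\beta_2,p)$. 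Finally, continuity of $\ell$ on $[0,1]$ (and local Lipschitz continuity near any interior maximizer, together with the fact that $0$ and $1$ are grid points) gives $\max_{0\le k\le n}\ell(k/n) = \ell(x^*) + O(n^{-1})$; dividing the displayed inequalities by $n$ then yields $\psi_n(\beta_1,\beta_2) = \ell(x^*) + O(n^{-1}\log n)$.

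The routine part is the two-sided bound on a sum of positive terms by its maximum; the only place real care is needed is the claim that the Stirling error is $O(\log n)$ \emph{uniformly} in $k$, including for $k$ comparable to $1$ or to $n$. This can be handled via the explicit two-sided bounds on $\binom{n}{k}$, or by splitting the sum into the ranges $k\le\log n$, $n-\log n\le k$, and the bulk, using $x\log x = O(n^{-1}\log n)$ whenever $0\le x\le n^{-1}\log n$; in every regime the discrepancy between $\log\binom{n}{k}$ and $n$ times the binomial entropy stays $O(\log n)$, which is all the stated rate requires. I expect this same row-by-row factorization, and the resulting exact finite-$n$ expression for $\psi_n$, to be the workhorse for the sharper asymptotics and the derivative estimates announced for Section~\ref{ESTIMATES}.
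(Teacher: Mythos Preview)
Your argument is correct. The first step---the row-wise factorization giving $\psi_n(\beta_1,\beta_2)=n^{-1}\log\sum_{k=0}^n\binom{n}{k}e^{\beta_1 k+\beta_2 n^{1-p}k^p}$---is exactly the content of the paper's Proposition~\ref{Zn}. From there, however, the two proofs diverge. The paper passes to an integral via a careful sum-to-integral comparison (Proposition~\ref{E}) and then applies a Laplace-type expansion (Proposition~\ref{laplace}); only the leading term of that expansion is actually used in the proof of Theorem~\ref{free_energy}. You instead stay with the discrete sum: a uniform Stirling bound turns each summand into $\exp(n\ell(k/n)+O(\log n))$, the trivial inequality $\max_k a_k\le\sum_k a_k\le(n{+}1)\max_k a_k$ handles the sum, and a one-line grid approximation replaces $\max_k\ell(k/n)$ by $\ell(x^*)$. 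Your route is shorter and entirely self-contained for the stated $O(n^{-1}\log n)$ rate; the paper's route is heavier here because Propositions~\ref{E} and~\ref{laplace} are built to deliver the sharper asymptotics needed for Theorems~\ref{MainThm}--\ref{marginaldensities}, and Theorem~\ref{free_energy} simply reuses that machinery. Your closing remark that the same factorization will drive the derivative estimates is exactly right.
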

In particular, letting $n\to \infty$, we get 
\begin{equation*}
\psi(\beta_1,\beta_2) = \max_{x \in [0,1]} \ell(x).
\end{equation*}
Essentially the same formula holds in 
the undirected case~\cite{Radin}. 
Thus, the phase diagram for 
our model is the 
same as in the undirected 
version, after multiplying
$\beta_1$ and $\beta_2$ by $2$. 
The following result from~\cite{Radin} 
characterizes the curve along 
which $\psi(\beta_1,\beta_2)$ 
is singular.
\begin{theorem}[Radin and Yin~\cite{Radin}]\label{trans_curve}
There is a certain curve in the 
$(\beta_1,\beta_2)$-plane with the endpoint
\begin{equation*}
(\beta_{1}^{c},\beta_{2}^{c})=\left(\log(p-1) - \frac{p}{p-1},\frac{p^{p-1}}{(p-1)^p}\right),
\end{equation*}
such that off the curve and at the endpoint, $\ell$ has a 
unique global maximizer $x^* \in (0,1)$, while on the curve away from the 
endpoint, $\ell$ has two global 
maximizers, $x_{1}^{*}$ and $x_{2}^{*}$, 
with $0 < x_1^* < (p-1)/p < x_2^* < 1$.
\end{theorem}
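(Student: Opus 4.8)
The plan is to work directly with the critical points of $\ell$. Since $\ell$ is analytic on $(0,1)$, continuous on $[0,1]$, and $\ell'(x)\to+\infty$ as $x\to0^+$ while $\ell'(x)\to-\infty$ as $x\to1^-$, every global maximizer lies in $(0,1)$ and solves $\ell'(x)=0$. I would rewrite this stationarity condition as $g(x)=\beta_1$, where $g(x):=\log\frac{x}{1-x}-p\beta_2\,x^{p-1}$, and note that $\ell''=-g'$ and $g'(x)=x^{p-2}\bigl(h(x)-p(p-1)\beta_2\bigr)$ with $h(x):=x^{-(p-1)}(1-x)^{-1}$. An elementary computation shows $h$ is strictly decreasing on $\bigl(0,\tfrac{p-1}{p}\bigr)$ and strictly increasing on $\bigl(\tfrac{p-1}{p},1\bigr)$, so it attains its minimum $h\bigl(\tfrac{p-1}{p}\bigr)=p^p/(p-1)^{p-1}=p(p-1)\beta_2^c$ at $x=\tfrac{p-1}{p}$, where $\beta_2^c=p^{p-1}/(p-1)^p$. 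Everything then reduces to counting, for each $(\beta_1,\beta_2)$, the solutions of $g(x)=\beta_1$ and comparing the values of $\ell$ at the resulting local maxima.

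First I would dispose of $\beta_2\le\beta_2^c$. There $h(x)\ge p(p-1)\beta_2$ for all $x$, with equality only possibly at $x=\tfrac{p-1}{p}$, so $g'\ge0$ and $g$ is strictly increasing; hence $g(x)=\beta_1$ has a unique solution, which is the unique global maximizer $x^*\in(0,1)$. The critical point $(\beta_1^c,\beta_2^c)$ sits at the boundary of this region: it is the unique triple $(x,\beta_1,\beta_2)$ solving $\ell'=\ell''=\ell'''=0$. Indeed $\ell''=\ell'''=0$ forces $(p-2)(1-x)=2x-1$, i.e. $x=\tfrac{p-1}{p}$, and then $\ell''=0$ gives $\beta_2=\beta_2^c$ and $\ell'=0$ gives $\beta_1=\log(p-1)-\tfrac{p}{p-1}=\beta_1^c$; at this point the unique maximizer is the degenerate one at $x=\tfrac{p-1}{p}$.

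The heart of the proof is $\beta_2>\beta_2^c$, where $h(x)=p(p-1)\beta_2$ has exactly two roots $a<\tfrac{p-1}{p}<b$, making $g$ increasing on $(0,a)$, decreasing on $(a,b)$, and increasing on $(b,1)$, with a local maximum value $g(a)$ and a strictly smaller local minimum value $g(b)$. If $\beta_1\notin[g(b),g(a)]$ there is a single stationary point and hence a unique global maximizer. If $\beta_1\in(g(b),g(a))$ there are three stationary points $x_1<a<x_2<b<x_3$ with $x_1,x_3$ local maxima and $x_2$ a local minimum; put $F(\beta_1):=\ell(x_3(\beta_1))-\ell(x_1(\beta_1))$. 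Because $\ell'(x_i)=0$, the envelope identity $\frac{d}{d\beta_1}\ell(x_i(\beta_1))=x_i(\beta_1)$ gives $F'(\beta_1)=x_3-x_1>0$, so $F$ is strictly increasing. Analysing the two degenerate endpoints shows $F(g(a)^-)>0$ and $F(g(b)^+)<0$: at $\beta_1=g(a)$ the merged point $a=x_1=x_2$ is an inflection (there $\ell'''(a)=-g''(a)=-a^{p-2}h'(a)>0$ since $h'(a)<0$), so $\ell$ increases through $a$ and $x_3$ is the unique global maximizer, whence $\ell(x_3)>\ell(a)$; the case $\beta_1=g(b)$ is symmetric with $\ell'''(b)<0$. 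Therefore $F$ has a unique zero $\beta_1^*(\beta_2)\in(g(b),g(a))$. At $(\beta_1^*(\beta_2),\beta_2)$ the two local maxima have equal height and both exceed the intermediate local minimum, so they are exactly the two global maximizers $x_1^*=x_1\in\bigl(0,\tfrac{p-1}{p}\bigr)$ and $x_2^*=x_3\in\bigl(\tfrac{p-1}{p},1\bigr)$; for every other $\beta_1$ one local maximum strictly dominates and is the unique global maximizer.

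Finally, the curve $\bigl\{(\beta_1^*(\beta_2),\beta_2):\beta_2>\beta_2^c\bigr\}$ is the phase transition curve, and as $\beta_2\downarrow\beta_2^c$ one has $a,b\to\tfrac{p-1}{p}$, so $g(a),g(b)\to\beta_1^c$ and the squeezed value $\beta_1^*(\beta_2)\to\beta_1^c$, identifying the endpoint $(\beta_1^c,\beta_2^c)$; smoothness of the curve, if wanted, follows from the implicit function theorem applied to $\{g(x_1)=g(x_3)=\beta_1,\ \ell(x_1)=\ell(x_3)\}$. The main obstacle is the bookkeeping at the degenerate endpoints $\beta_1=g(a)$ and $\beta_1=g(b)$: one must use the third-derivative signs $\ell'''(a)>0>\ell'''(b)$ to see that the merging stationary point is an inflection rather than an extremum, so that the third (nondegenerate) stationary point is the unique global maximizer there. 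Once this is established, strict monotonicity of $F$ immediately yields uniqueness of the transition value $\beta_1^*(\beta_2)$ and the whole picture follows.
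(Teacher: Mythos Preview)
The paper does not supply its own proof of this theorem; it is restated verbatim from Radin and Yin~\cite{Radin}. The only related analysis in the paper appears in the proof of Proposition~\ref{order}, which studies the sign of $\ell''$ via the auxiliary function $m(x)=\bigl(p(p-1)x^{p-1}(1-x)\bigr)^{-1}$, exactly your $h(x)/\bigl(p(p-1)\bigr)$, and which explicitly says it is ``following the proof of Proposition~3.2 in Radin and Yin.''

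Your argument is correct and self-contained, and its first half (the shape of $g$, the role of $h$, the trichotomy in $\beta_2$) is essentially the same computation the paper borrows from~\cite{Radin} for Proposition~\ref{order}. The part that goes beyond what the present paper records is your identification of the curve itself: fixing $\beta_2>\beta_2^c$, tracking the two nondegenerate local maxima $x_1(\beta_1)<x_3(\beta_1)$, and using the envelope identity $\tfrac{d}{d\beta_1}\ell(x_i(\beta_1))=x_i$ to get $F'(\beta_1)=x_3-x_1>0$, hence a unique $\beta_1^*(\beta_2)$ with $F=0$. Your endpoint analysis at $\beta_1=g(a)$ and $\beta_1=g(b)$ via the sign of $\ell'''$ is also correct (and indeed, writing $\ell'=\beta_1-g$ makes it transparent that at $\beta_1=g(a)$ one has $\ell'>0$ on all of $(0,x_3)$, so $x_3$ is the unique maximizer). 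This is the standard route taken in~\cite{Radin}; you have reproduced it cleanly, so there is nothing to correct and nothing substantively different to compare.
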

The curve in Theorem~\ref{trans_curve} will be called the 
{\em phase transition curve} and written $\beta_2 = q(\beta_1)$. The endpoint 
will be called the {\em critical point}.  
Though our free energy $\psi(\beta_1,\beta_2)$ is essentially the same 
as its undirected counterpart, 
the behavior of our 
model on the phase transition curve 
is qualitatively different 
from the undirected version, as 
we will see below. 

It is not possible to 
write an explicit equation for the 
phase transition curve 
in general; see~\cite{Radin} 
for a graph obtained numerically. 
However, in~\cite{Radin} is shown that $q(\beta_{1})$ 
is continuous and decreasing in $\beta_{1}$, 
with $\lim_{\beta_{1}\rightarrow-\infty}|q(\beta_{1})+\beta_{1}|=0$.
We have the following more precise result.

\begin{theorem}\label{PropertyCurve}
(i) $q(\beta_{1})$ is differentiable for $\beta_{1}<\beta_{1}^{c}$ with
\begin{equation*}
q'(\beta_{1})=-\frac{x_{1}^{\ast}-x_{2}^{\ast}}{(x_{1}^{\ast})^{p}-(x_{2}^{\ast})^{p}}<0.
\end{equation*}
In particular,
\begin{equation*}
\lim_{\beta_{1}\rightarrow\beta_{1}^{c}}q'(\beta_{1})=-\frac{p^{p-2}}{(p-1)^{p-1}},
\qquad
\text{and}
\qquad
\lim_{\beta_{1}\rightarrow-\infty}q'(\beta_{1})=-1.
\end{equation*}

(ii) $q(\beta_{1})$ is convex in $\beta_{1}$.
\end{theorem}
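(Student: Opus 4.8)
The plan is to realise, for $\beta_{1}<\beta_{1}^{c}$, the pair of maximizers together with the curve value as the solution of a fixed system of scalar equations and then differentiate it. Write $\ell'$, $\ell''$, $\ell'''$ for the $x$-derivatives of $\ell$ (with $\beta_{1},\beta_{2}$ treated as parameters). By Theorem~\ref{trans_curve}, for $\beta_{1}<\beta_{1}^{c}$ the triple $(x_{1}^{\ast},x_{2}^{\ast},q(\beta_{1}))$ satisfies
\begin{equation*}
\ell'(x_{1}^{\ast})=0,\qquad \ell'(x_{2}^{\ast})=0,\qquad \ell(x_{1}^{\ast})=\ell(x_{2}^{\ast})
\end{equation*}
with $\ell$ evaluated at $\beta_{2}=q(\beta_{1})$. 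These maximizers are nondegenerate: if $\ell''(x_{i}^{\ast})=0$ then, $x_{i}^{\ast}$ being a local maximum, also $\ell'''(x_{i}^{\ast})=0$, and a direct computation shows the only solution of $\ell'=\ell''=\ell'''=0$ is $(x,\beta_{1},\beta_{2})=\bigl((p-1)/p,\beta_{1}^{c},\beta_{2}^{c}\bigr)$, excluded here. The Jacobian of the system above in $(x_{1}^{\ast},x_{2}^{\ast},q)$ is upper triangular with diagonal $\ell''(x_{1}^{\ast}),\ \ell''(x_{2}^{\ast}),\ (x_{1}^{\ast})^{p}-(x_{2}^{\ast})^{p}$ (the vanishing of $\ell'$ at the maximizers kills the off-diagonal entries of the last row), hence nonsingular since $0<x_{1}^{\ast}<x_{2}^{\ast}<1$; so the analytic implicit function theorem makes $\beta_{1}\mapsto\bigl(x_{1}^{\ast}(\beta_{1}),x_{2}^{\ast}(\beta_{1}),q(\beta_{1})\bigr)$ real-analytic on $(-\infty,\beta_{1}^{c})$. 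Differentiating the common maximum value $G(\beta_{1}):=\ell(x_{1}^{\ast}(\beta_{1}))=\ell(x_{2}^{\ast}(\beta_{1}))$ and using $\ell'(x_{i}^{\ast})=0$ (the envelope identity) gives $G'(\beta_{1})=x_{i}^{\ast}+(x_{i}^{\ast})^{p}q'(\beta_{1})$ for $i=1,2$; equating the two expressions yields the asserted formula for $q'(\beta_{1})$, which is negative because $0<x_{1}^{\ast}<x_{2}^{\ast}<1$.

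The two limits follow from the endpoint behaviour of the maximizers. As $\beta_{1}\to\beta_{1}^{c}$, continuity of $q$ (from~\cite{Radin}) and of the global maximizers, together with Theorem~\ref{trans_curve} and the identification of $(p-1)/p$ as the unique maximizer at $(\beta_{1}^{c},\beta_{2}^{c})$ above, force $x_{1}^{\ast},x_{2}^{\ast}\to(p-1)/p$; L'H\^opital's rule in the formula for $q'$ then gives $\lim q'=-\bigl(p((p-1)/p)^{p-1}\bigr)^{-1}=-p^{p-2}/(p-1)^{p-1}$. As $\beta_{1}\to-\infty$, it follows from~\cite{Radin} (or from a short asymptotic analysis of $\ell'(x_{i}^{\ast})=0$ using $q(\beta_{1})+\beta_{1}\to0$) that $x_{1}^{\ast}\to0$ and $x_{2}^{\ast}\to1$, whence $\lim q'=-1$.

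For (ii) I would differentiate once more. Implicitly differentiating $\ell'(x_{i}^{\ast})=0$ gives $r_{i}+\ell''(x_{i}^{\ast})\frac{d}{d\beta_{1}}x_{i}^{\ast}=0$, where $r_{i}:=1+pq'(x_{i}^{\ast})^{p-1}$; using the formula for $q'$ one checks $r_{1}=g_{1}/D$ and $r_{2}=-g_{2}/D$ with $D:=(x_{2}^{\ast})^{p}-(x_{1}^{\ast})^{p}>0$ and $g_{1},g_{2}>0$ the two gaps (positive by strict convexity of $x\mapsto x^{p}$) between the graph of $x\mapsto x^{p}$ at one maximizer and its tangent line at the other. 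Differentiating the identity $x_{i}^{\ast}+(x_{i}^{\ast})^{p}q'=G'(\beta_{1})$, substituting $\frac{d}{d\beta_{1}}x_{i}^{\ast}=-r_{i}/\ell''(x_{i}^{\ast})$, and eliminating $G''$ between $i=1$ and $i=2$ yields
\begin{equation*}
q''(\beta_{1})=\frac{1}{D^{3}}\left(\frac{g_{1}^{2}}{-\ell''(x_{1}^{\ast})}-\frac{g_{2}^{2}}{-\ell''(x_{2}^{\ast})}\right),
\end{equation*}
so $q''\ge0$ is equivalent to $g_{1}^{2}\bigl(-\ell''(x_{2}^{\ast})\bigr)\ge g_{2}^{2}\bigl(-\ell''(x_{1}^{\ast})\bigr)$.

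This last inequality is the heart of (ii) and I expect it to be the main obstacle. I would eliminate $\beta_{2}$ from $\ell''(x_{i}^{\ast})=p(p-1)\beta_{2}(x_{i}^{\ast})^{p-2}-\bigl(x_{i}^{\ast}(1-x_{i}^{\ast})\bigr)^{-1}$ via the first-order conditions, turning it into an inequality in $x_{1}^{\ast},x_{2}^{\ast},p$ only; factor $g_{i}=(x_{2}^{\ast}-x_{1}^{\ast})^{2}P_{i}$ with $P_{1},P_{2}$ polynomials in $x_{1}^{\ast},x_{2}^{\ast}$ having positive coefficients and $P_{1}<P_{2}$, reducing to $P_{1}^{2}\bigl(-\ell''(x_{2}^{\ast})\bigr)\ge P_{2}^{2}\bigl(-\ell''(x_{1}^{\ast})\bigr)$; and establish this using strict convexity of $x\mapsto x^{p}$, monotonicity of $x\mapsto x^{p-1}$, the shape of $x\mapsto1/(x(1-x))$ on $(0,1)$, and, if needed, the Maxwell relation $\int_{x_{1}^{\ast}}^{x_{2}^{\ast}}\ell'(x)\,dx=\ell(x_{2}^{\ast})-\ell(x_{1}^{\ast})=0$. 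The other steps — the implicit-function setup, the envelope identities, and the two implicit differentiations — are routine.
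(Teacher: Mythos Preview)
Your treatment of part~(i) is essentially identical to the paper's: the same three defining equations $\ell'(x_{1}^{\ast})=\ell'(x_{2}^{\ast})=0$, $\ell(x_{1}^{\ast})=\ell(x_{2}^{\ast})$; the implicit function theorem (the paper invokes it with $\ell''(x_{i}^{\ast})\neq0$ rather than via your $3\times3$ Jacobian, but to the same effect); and the envelope identity $G'(\beta_{1})=x_{i}^{\ast}+(x_{i}^{\ast})^{p}q'(\beta_{1})$ to read off $q'$. The two limits are obtained in the same way.

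For part~(ii) the approaches also coincide up to cosmetic differences. The paper differentiates the closed formula for $q'$ by the quotient rule, obtaining
\[
q''(\beta_{1})=-\frac{B_{1}}{D^{2}}\,\frac{dx_{1}^{\ast}}{d\beta_{1}}-\frac{B_{2}}{D^{2}}\,\frac{dx_{2}^{\ast}}{d\beta_{1}},
\qquad
B_{i}=(1-p)(x_{i}^{\ast})^{p}+p(x_{i}^{\ast})^{p-1}x_{j}^{\ast}-(x_{j}^{\ast})^{p}\ \ (j\neq i),
\]
and shows, exactly as you do, that $\frac{dx_{1}^{\ast}}{d\beta_{1}}>0$ and $\frac{dx_{2}^{\ast}}{d\beta_{1}}<0$ via the same implicit differentiation $r_{i}+\ell''(x_{i}^{\ast})\frac{dx_{i}^{\ast}}{d\beta_{1}}=0$. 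Since $B_{i}=-g_{i}$ in your notation, this is precisely your expression $q''=\frac{1}{D^{2}}\bigl(g_{1}\frac{dx_{1}^{\ast}}{d\beta_{1}}+g_{2}\frac{dx_{2}^{\ast}}{d\beta_{1}}\bigr)=\frac{1}{D^{3}}\bigl(g_{1}^{2}/(-\ell''(x_{1}^{\ast}))-g_{2}^{2}/(-\ell''(x_{2}^{\ast}))\bigr)$.

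Where the paper and your proposal diverge is at the final sign determination. The paper closes by asserting $B_{1}<0$ and $B_{2}>0$, so that both summands are positive and $q''>0$. But your convexity identification $B_{i}=-g_{i}$ shows that in fact $B_{2}<0$ as well (for $p=2$ one has $B_{1}=B_{2}=-(x_{2}^{\ast}-x_{1}^{\ast})^{2}$), so the two summands have \emph{opposite} signs and the paper's shortcut does not work as printed. The case $p=2$, where the paper itself notes $q(\beta_{1})=-\beta_{1}$ and hence $q''\equiv0$, confirms that the two contributions cancel rather than reinforce. Thus your instinct that the inequality $g_{1}^{2}(-\ell''(x_{2}^{\ast}))\ge g_{2}^{2}(-\ell''(x_{1}^{\ast}))$ is the genuine content of~(ii) is correct; your sketched plan for it is plausible but not yet a proof, and the paper does not supply a valid alternative.
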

When $p = 2$, along the line $\beta_1 + \beta_2 = 0$ the function $\ell$ is 
symmetric around $1/2$. It follows that $x_1^* + x_2^* = 1$ along this 
line, so Theorem~\ref{PropertyCurve} implies $\ell(\beta_1) = -\beta_1$. 
See Figure~2(i).

The following theorems give the scaling of 
the variance and 
covariance of $e(X)$ and $s(X)$. 
See~\cite{Radin} for 
computations of these
quantities 
off the phase transition curve 
in the undirected graph case.
Here we compute the scaling of 
the variance and covariance at all
$(\beta_1,\beta_2)$, including on 
the phase transition curve and at the critical point. On 
the transition curve, we  
use 
 precise asymptotics 
for partial derivatives 
of $\psi_n(\beta_1,\beta_2)$ 
to obtain the scalings. 
We emphasize that
such asymptotics 
cannot be obtained directly from
large deviations 
techniques of the type used in~\cite{ChatterjeeDiaconis,Radin}.

\begin{theorem}\label{MainThm}
Off the phase transition curve, 
\begin{equation*}
\lim_{n\to \infty}\frac{\partial^2 }{\partial \beta_1^2}\psi_n(\beta_1,\beta_2) 
= \frac{\partial^2 }{\partial \beta_1^2}\lim_{n\to \infty}\psi_n(\beta_1,\beta_2)
= \frac{1}{|\ell''(x^*)|}.
\end{equation*}
On the phase transition curve except at the critical point, 
\begin{equation*}
\lim_{n\rightarrow\infty}\frac{1}{n}\frac{\partial^2 }{\partial \beta_1^2}\psi_n(\beta_1,\beta_2) 
=\frac{(x_{1}^{\ast}-x_{2}^{\ast})^{2}\sqrt{x_{1}^{\ast}(1-x_{1}^{\ast})|\ell''(x_{1}^{\ast})|}
\sqrt{x_{2}^{\ast}(1-x_{2}^{\ast})|\ell''(x_{2}^{\ast})|}}
{\left(\sqrt{x_{1}^{\ast}(1-x_{1}^{\ast})|\ell''(x_{1}^{\ast})|}
+\sqrt{x_{2}^{\ast}(1-x_{2}^{\ast})|\ell''(x_{2}^{\ast})|}\right)^{2}}.
\end{equation*}
At the critical point, 
\begin{equation*}
\lim_{n\rightarrow\infty}\frac{1}{n^{1/2}}
\frac{\partial^2 }{\partial \beta_1^2}\psi_n(\beta_1,\beta_2)
=\frac{\Gamma(\frac{3}{4})}{\Gamma(\frac{1}{4})}
\frac{2\sqrt{6}(p-1)}{p^{5/2}}. 
\end{equation*}
\end{theorem}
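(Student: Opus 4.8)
The plan is to exploit the factorization of the measure~\eqref{1} over the rows of $X$. Writing $d_i=\sum_j X_{ij}$ for the out-degree of node $i$, the exponent in~\eqref{1} equals $\sum_{i=1}^n(\beta_1 d_i+\beta_2 n^{1-p}d_i^p)$, and since distinct $d_i$ involve disjoint entries of $X$,
\[
Z_n(\beta_1,\beta_2)=\left(\sum_{d=0}^n\binom{n}{d}e^{\beta_1 d+\beta_2 n^{1-p}d^p}\right)^{\!n}=:A_n(\beta_1,\beta_2)^n,
\]
so $\psi_n=n^{-1}\log A_n$, and the out-degree of a fixed node has law $\mu_n(d)\propto\binom{n}{d}e^{\beta_1 d+\beta_2 n^{1-p}d^p}$. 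Differentiating $\log A_n$ twice in $\beta_1$ (equivalently, combining~\eqref{derivs} with the fact that $n^2e(X)$ is a sum of $n$ i.i.d.\ out-degrees) gives
\[
\frac{\partial^2}{\partial\beta_1^2}\psi_n(\beta_1,\beta_2)=\frac1n\,\mathrm{Var}_{\mu_n}(d).
\]
Hence the theorem reduces to Laplace asymptotics for $A_n$ and for $\sum_d d^k\binom{n}{d}e^{\beta_1 d+\beta_2 n^{1-p}d^p}$, $k=1,2$; by Stirling's formula the summand equals, with $x=d/n$, a factor $(2\pi n x(1-x))^{-1/2}e^{n\ell(x)}$ times $1+O(1/n)$, which is precisely what the estimates of Section~\ref{ESTIMATES} quantify and what drives all three cases.

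Off the phase transition curve, Theorem~\ref{trans_curve} supplies a unique maximizer $x^*\in(0,1)$ of $\ell$, which I would first check is nondegenerate, $\ell''(x^*)<0$; a Laplace/local central limit argument then shows $(d-nx^*)/\sqrt n$ is asymptotically Gaussian with variance $1/|\ell''(x^*)|$, so $n^{-1}\mathrm{Var}_{\mu_n}(d)\to 1/|\ell''(x^*)|$. Exchanging $\lim_n$ with $\partial_{\beta_1}^2$ is legitimate because $\psi=\ell(x^*)$ is analytic off the curve (Theorems~\ref{free_energy} and~\ref{trans_curve}) and, as recalled in the introduction from~\cite{Radin}, limit and derivative commute wherever the limiting derivative exists. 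On the curve but away from the critical point, Theorem~\ref{trans_curve} gives two nondegenerate global maximizers $0<x_1^*<x_2^*<1$ with $\ell(x_1^*)=\ell(x_2^*)$; the Laplace expansion near $x_i^*$ contributes to $A_n$ a mass proportional to $e^{n\ell(x_i^*)}(x_i^*(1-x_i^*)|\ell''(x_i^*)|)^{-1/2}$, so $\mu_n$ is asymptotically a two-point mixture placing $d\approx nx_i^*$ with weight $w_i=a_{3-i}/(a_1+a_2)$, where $a_i:=\sqrt{x_i^*(1-x_i^*)|\ell''(x_i^*)|}$. By the law of total variance $\mathrm{Var}_{\mu_n}(d)=n^2w_1w_2(x_1^*-x_2^*)^2+O(n)$, the within-cluster fluctuations being only $O(n)$, so $n^{-1}\partial_{\beta_1}^2\psi_n\to w_1w_2(x_1^*-x_2^*)^2=a_1a_2(x_1^*-x_2^*)^2/(a_1+a_2)^2$, which is the stated formula.

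At the critical point the maximizer is $x^*=(p-1)/p$, and a direct computation --- using $\beta_2^c=p^{p-1}/(p-1)^p$ and $\frac{d^4}{dx^4}\bigl(-x\log x-(1-x)\log(1-x)\bigr)=-2x^{-3}-2(1-x)^{-3}$ --- gives $\ell''(x^*)=\ell'''(x^*)=0$ and $\ell^{(4)}(x^*)=-p^5/(p-1)^2<0$. So near the maximum $\ell(x)=\ell(x^*)+\tfrac1{24}\ell^{(4)}(x^*)(x-x^*)^4+o((x-x^*)^4)$, and the correct scaling is $u=(d-nx^*)/n^{3/4}$: the exponent $n\ell(d/n)$ becomes $n\ell(x^*)+\tfrac1{24}\ell^{(4)}(x^*)u^4+o(1)$, so $u$ has limiting density proportional to $\exp\bigl(\tfrac1{24}\ell^{(4)}(x^*)u^4\bigr)$. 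Using $\int_{\mathbb R}u^k e^{-\lambda u^4}\,du=\tfrac12\lambda^{-(k+1)/4}\Gamma\bigl(\tfrac{k+1}{4}\bigr)$ for $k=0,2$ gives $\mathrm{Var}(u)\to(24/|\ell^{(4)}(x^*)|)^{1/2}\Gamma(\tfrac34)/\Gamma(\tfrac14)$, hence $\mathrm{Var}_{\mu_n}(d)\sim n^{3/2}(24/|\ell^{(4)}(x^*)|)^{1/2}\Gamma(\tfrac34)/\Gamma(\tfrac14)$; substituting $|\ell^{(4)}(x^*)|=p^5/(p-1)^2$ and dividing by $n^{3/2}$ produces $\dfrac{\Gamma(3/4)}{\Gamma(1/4)}\cdot\dfrac{2\sqrt6(p-1)}{p^{5/2}}$.

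The step I expect to be the main obstacle is making these Laplace approximations uniform enough to control the \emph{second} moment: the multiplicative Stirling corrections to $\binom{n}{d}$ and the error in replacing $\ell$ by its leading Taylor polynomial must be shown negligible across the full relevant window of $d$ --- width of order $\sqrt n$ off the curve, of order $\sqrt n$ around each peak on the curve, and the wider $n^{3/4}$ at the critical point --- and the contribution from $d$ outside those windows must be shown exponentially small relative to the peak(s), where the absence of other critical points of $\ell$ (Theorem~\ref{trans_curve}) is essential. These are exactly the quantitative estimates assembled in Section~\ref{ESTIMATES}; granting them, the three limits follow by the bookkeeping sketched above.
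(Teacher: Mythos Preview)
Your proposal is correct and follows essentially the same route as the paper: the row factorization $Z_n=A_n^n$ is Proposition~\ref{Zn}, the identity $\partial_{\beta_1}^2\psi_n=n^{-1}\mathrm{Var}_{\mu_n}(d)$ is exactly equation~\eqref{2nd_deriv}, and the three regimes are handled by the Laplace asymptotics of Propositions~\ref{E} and~\ref{laplace} together with the nondegeneracy facts you check (which the paper isolates as Proposition~\ref{order}). Your probabilistic packaging---law of total variance on the curve, and centering via $u=(d-nx^*)/n^{3/4}$ at the critical point---streamlines the bookkeeping a bit relative to the paper, which instead computes raw moments $\mathbb{E}[W^k e^{\cdots}]$ and must carry the two-term expansion with the $\Theta^{(k)}$ coefficients to survive the leading-order cancellation $d_0^{(2)}d_0^{(0)}=(d_0^{(1)})^2$; but the underlying analysis is the same.
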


\begin{theorem}\label{starvariance}
Off the phase transition curve, 
\begin{equation*}
\lim_{n\to \infty}\frac{\partial^2 }{\partial \beta_2^2}\psi_n(\beta_1,\beta_2) 
= \frac{\partial^2 }{\partial \beta_2^2}\lim_{n\to \infty}\psi_n(\beta_1,\beta_2)
= \frac{p^{2}(x^{\ast})^{2p-2}}{|\ell''(x^*)|}.
\end{equation*}
On the transition curve except at the critical point, 
\begin{equation*}
\lim_{n\rightarrow\infty}\frac{1}{n}\frac{\partial^2 }{\partial \beta_2^2}\psi_n(\beta_1,\beta_2)
=\frac{((x_{1}^{\ast})^{p}-(x_{2}^{\ast})^{p})^{2}\sqrt{x_{1}^{\ast}(1-x_{1}^{\ast})|\ell''(x_{1}^{\ast})|}
\sqrt{x_{2}^{\ast}(1-x_{2}^{\ast})|\ell''(x_{2}^{\ast})|}}
{\left(\sqrt{x_{1}^{\ast}(1-x_{1}^{\ast})|\ell''(x_{1}^{\ast})|}
+\sqrt{x_{2}^{\ast}(1-x_{2}^{\ast})|\ell''(x_{2}^{\ast})|}\right)^{2}}.
\end{equation*}
At the critical point, 
\begin{equation*}
\lim_{n\rightarrow\infty}\frac{1}{n^{1/2}}\frac{\partial^2 }{\partial \beta_2^2}\psi_n(\beta_1,\beta_2) 
=\frac{2\sqrt{6}\Gamma(\frac{3}{4})}{\Gamma(\frac{1}{4})}\frac{(p-1)^{2p-1}}{p^{2p-\frac{3}{2}}}.
\end{equation*}
\end{theorem}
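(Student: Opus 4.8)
The plan is to run the same argument that underlies Theorem~\ref{MainThm}, inserting the chain rule for $x\mapsto x^p$ at the point(s) where $D/n$ concentrates. First I would pass to row sums. Since $e(X)$ and $s(X)$ depend on $X$ only through the row sums $D_i:=\sum_j X_{ij}$, the normalization in~\eqref{1} factorizes as $Z_n=W_n^{\,n}$ with $W_n:=\sum_{d=0}^n\binom{n}{d}\exp(\beta_1 d+\beta_2 n^{1-p}d^p)$, and under ${\mathbb P}_n$ the $D_1,\dots,D_n$ are i.i.d.\ with law $\mu_n(d)\propto\binom{n}{d}\exp(\beta_1 d+\beta_2 n^{1-p}d^p)$. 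Because $s(X)=n^{-1}\sum_i(D_i/n)^p$, formula~\eqref{derivs} then gives the exact identity
\[
\frac{\partial^2}{\partial\beta_2^2}\psi_n(\beta_1,\beta_2)=n^2\,\mathrm{Var}_n\!\big(s(X)\big)=n\,\mathrm{Var}_{\mu_n}\!\big((D/n)^p\big),
\]
the analogue of $\frac{\partial^2}{\partial\beta_1^2}\psi_n=n\,\mathrm{Var}_{\mu_n}(D/n)$ used for Theorem~\ref{MainThm}. By Stirling, on the grid $\{0,1/n,\dots,1\}$ one has $\mu_n(nx)\propto (x(1-x))^{-1/2}e^{n\ell(x)}(1+o(1))$ with $\ell$ as in Section~\ref{Description}, so everything reduces to Laplace asymptotics of $\mu_n$ localized at the global maximizer(s) of $\ell$ identified in Theorem~\ref{trans_curve}.

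Off the phase transition curve, $\ell$ has a unique global maximizer $x^\ast$ with $\ell''(x^\ast)<0$; a (discrete) Laplace/local–CLT argument gives $\sqrt n\,(D/n-x^\ast)\Rightarrow\mathcal N(0,|\ell''(x^\ast)|^{-1})$ with convergence of second moments, and a first–order Taylor expansion of $x\mapsto x^p$ at $x^\ast$ (chain rule factor $p(x^\ast)^{p-1}$) yields $\mathrm{Var}_{\mu_n}((D/n)^p)=p^2(x^\ast)^{2p-2}|\ell''(x^\ast)|^{-1}n^{-1}(1+o(1))$, hence the first claimed limit $p^2(x^\ast)^{2p-2}/|\ell''(x^\ast)|$; that this equals $\frac{\partial^2}{\partial\beta_2^2}\lim_n\psi_n$ follows as in~\cite{Radin}, or from Theorem~\ref{free_energy} and analyticity of $\ell$ at $x^\ast$. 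On the curve away from the critical point, Theorem~\ref{trans_curve} gives exactly two nondegenerate global maximizers $x_1^\ast<x_2^\ast$; summing the Gaussian Laplace contributions (including the smooth prefactor $(x(1-x))^{-1/2}$) shows $D/n$ concentrates on $\{x_1^\ast,x_2^\ast\}$ with weights
\[
\pi_i=\frac{\big(x_i^\ast(1-x_i^\ast)|\ell''(x_i^\ast)|\big)^{-1/2}}{\big(x_1^\ast(1-x_1^\ast)|\ell''(x_1^\ast)|\big)^{-1/2}+\big(x_2^\ast(1-x_2^\ast)|\ell''(x_2^\ast)|\big)^{-1/2}},\qquad i=1,2,
\]
up to $O(n^{-1/2})$; the within-cluster Gaussian spread is $O(n^{-1})$ and negligible, so $\mathrm{Var}_{\mu_n}((D/n)^p)\to\pi_1\pi_2\big((x_1^\ast)^p-(x_2^\ast)^p\big)^2$ and $n^{-1}\frac{\partial^2}{\partial\beta_2^2}\psi_n\to\pi_1\pi_2\big((x_1^\ast)^p-(x_2^\ast)^p\big)^2$. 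Writing $a_i:=x_i^\ast(1-x_i^\ast)|\ell''(x_i^\ast)|$ and simplifying $\pi_1\pi_2=\sqrt{a_1a_2}/(\sqrt{a_1}+\sqrt{a_2})^2$ produces exactly the stated formula.

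At the critical point, $x^\ast=(p-1)/p$ and $\ell''(x^\ast)=\ell'''(x^\ast)=0$, while a direct differentiation of $\ell(x)=\beta_1 x+\beta_2 x^p-x\log x-(1-x)\log(1-x)$ at $x^\ast$, using $\beta_2=\beta_2^c=p^{p-1}/(p-1)^p$, gives $\ell^{(4)}(x^\ast)=-p^5/(p-1)^2<0$. The maximum of $\ell$ is therefore quartic, so the correct rescaling is by $n^{1/4}$: $\big(n|\ell^{(4)}(x^\ast)|/24\big)^{1/4}(D/n-x^\ast)$ converges in distribution to the law with density $\propto e^{-y^4}$, with convergence of moments, the prefactor $(x(1-x))^{-1/2}$ being asymptotically constant near $x^\ast$. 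Since $\int y^2e^{-y^4}\,dy/\int e^{-y^4}\,dy=\Gamma(3/4)/\Gamma(1/4)$, this gives $\mathrm{Var}_{\mu_n}(D/n)=\big(n|\ell^{(4)}(x^\ast)|/24\big)^{-1/2}\Gamma(3/4)/\Gamma(1/4)\,(1+o(1))$, and the Taylor factor $p^2(x^\ast)^{2p-2}$ yields
\[
n^{-1/2}\frac{\partial^2}{\partial\beta_2^2}\psi_n(\beta_1,\beta_2)=n^{1/2}\,\mathrm{Var}_{\mu_n}\!\big((D/n)^p\big)\longrightarrow p^2(x^\ast)^{2p-2}\sqrt{\frac{24}{|\ell^{(4)}(x^\ast)|}}\,\frac{\Gamma(3/4)}{\Gamma(1/4)};
\]
substituting $x^\ast=(p-1)/p$ and $\ell^{(4)}(x^\ast)=-p^5/(p-1)^2$ turns the right-hand side into $\dfrac{2\sqrt6\,\Gamma(3/4)}{\Gamma(1/4)}\dfrac{(p-1)^{2p-1}}{p^{2p-3/2}}$, as claimed.

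The main obstacle is not the algebra above but the Laplace/local–limit estimates that justify it: controlling the exponentially small mass of $\mu_n$ away from the maximizer(s), handling the weight $(D/n)^p$ and the non-Gaussian prefactor $(x(1-x))^{-1/2}$ uniformly, coping with the lattice nature of the sum, and — the one genuinely new point relative to the Gaussian case — establishing the quartic local limit $\propto e^{-y^4}$ at the critical point together with convergence of second moments. These are exactly the estimates assembled in Section~\ref{ESTIMATES}; granting them, Theorem~\ref{starvariance} follows from the proof of Theorem~\ref{MainThm} by inserting the chain-rule factor coming from $x\mapsto x^p$ (off the curve and at the critical point) and the between-clusters jump $((x_1^\ast)^p-(x_2^\ast)^p)^2$ (on the curve).
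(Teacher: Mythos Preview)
Your proposal is correct and follows essentially the same route as the paper: the factorization $Z_n=W_n^{\,n}$ into i.i.d.\ row sums is Proposition~\ref{Zn}, the identity $\frac{\partial^2}{\partial\beta_2^2}\psi_n=n\,\mathrm{Var}_{\mu_n}((D/n)^p)$ is exactly the paper's formula for $\frac{\partial^2}{\partial\beta_2^2}\psi_n$ in terms of moments of $W$, and the Laplace asymptotics you invoke in the three regimes are precisely Propositions~\ref{E} and~\ref{laplace} together with Proposition~\ref{order}. The only cosmetic difference is that where the paper expands the coefficients $d_j^{(k)}$ for $k=0,p,2p$ and verifies algebraic identities such as $d_0^{(2p)}d_0^{(0)}=(d_0^{(p)})^2$ to extract the leading term, you package the same cancellation as a delta-method/chain-rule step, multiplying $\mathrm{Var}_{\mu_n}(D/n)$ by $p^2(x^\ast)^{2p-2}$; this is slightly cleaner but mathematically identical.
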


\begin{theorem}\label{covariance}
Off the phase transition curve, 
\begin{equation*}
\lim_{n\to \infty}\frac{\partial^{2}}{\partial\beta_{1}\partial\beta_{2}}\psi_n(\beta_1,\beta_2) 
=\frac{\partial^{2}}{\partial\beta_{1}\partial\beta_{2}}\lim_{n\to \infty}\psi_n(\beta_1,\beta_2)
=\frac{p(x^{\ast})^{p-1}}{|\ell''(x^*)|}.
\end{equation*}
On the transition curve except at the critical point, 
\begin{align*}
&\lim_{n\rightarrow\infty}\frac{1}{n}
\frac{\partial^2 }{\partial \beta_1 \partial \beta_2}\psi_n(\beta_1,\beta_2)
\\
&=
\frac{((x_{1}^{\ast})^{p}-(x_{2}^{\ast})^{p})(x_{1}^{\ast}-x_{2}^{\ast})
\sqrt{x_{1}^{\ast}(1-x_{1}^{\ast})|\ell''(x_{1}^{\ast})|}
\sqrt{x_{2}^{\ast}(1-x_{2}^{\ast})|\ell''(x_{2}^{\ast})|}}
{\left(\sqrt{x_{1}^{\ast}(1-x_{1}^{\ast})|\ell''(x_{1}^{\ast})|}
+\sqrt{x_{2}^{\ast}(1-x_{2}^{\ast})|\ell''(x_{2}^{\ast})|}\right)^{2}}.
\end{align*}
At the critical point, 
\begin{equation*}
\lim_{n\rightarrow\infty}\frac{1}{n^{1/2}}
\frac{\partial^2 }{\partial \beta_1 \partial \beta_2}\psi_n(\beta_1,\beta_2)
=\frac{2\sqrt{6}\Gamma(\frac{3}{4})}{\Gamma(\frac{1}{4})}\frac{(p-1)^{p}}{p^{p+\frac{1}{2}}}.
\end{equation*}
\end{theorem}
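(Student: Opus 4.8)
The plan is to exploit the product structure of the model, as for Theorems~\ref{free_energy}, \ref{MainThm} and~\ref{starvariance}. Since $n^{2}\beta_{1}e(X)+n^{2}\beta_{2}s(X)=\beta_{1}\sum_{i}d_{i}+n^{1-p}\beta_{2}\sum_{i}d_{i}^{p}$ with $d_{i}=\sum_{j}X_{ij}$, summing over $X$ row by row gives $Z_{n}=a_{n}^{\,n}$ with $a_{n}=\sum_{k=0}^{n}\binom{n}{k}\exp(\beta_{1}k+n^{1-p}\beta_{2}k^{p})$, so $\psi_{n}=n^{-1}\log a_{n}$ and under ${\mathbb P}_{n}$ the rows of $X$ are i.i.d., each row having $x:=d_{i}/n$ distributed as the tilted law $\mu_{n}$ on $\{0,1/n,\dots,1\}$ with weight proportional to $\binom{n}{nx}e^{n\beta_{1}x+n\beta_{2}x^{p}}$, which by Stirling is $\sim(2\pi nx(1-x))^{-1/2}e^{n\ell(x)}$. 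I would first record the identity $\frac{\partial^{2}}{\partial\beta_{1}\partial\beta_{2}}\psi_{n}=n^{2}\operatorname{Cov}_{n}(e(X),s(X))=n\operatorname{Cov}_{\mu_{n}}(x,x^{p})$, reducing everything to the asymptotics of a single scalar covariance under $\mu_{n}$; this is the polarization of the variance computations in Theorems~\ref{MainThm} and~\ref{starvariance}, and the $\mu_{n}$-estimates from Section~\ref{ESTIMATES} apply without change.

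Off the phase transition curve and away from the critical point, Theorem~\ref{trans_curve} gives a unique nondegenerate global maximizer $x^{*}$ of $\ell$, so a Laplace/local-limit estimate makes $\mu_{n}$ asymptotically Gaussian about $x^{*}$ with $n\operatorname{Var}_{\mu_{n}}(x)\to 1/|\ell''(x^{*})|$. Linearizing $x^{p}=(x^{*})^{p}+p(x^{*})^{p-1}(x-x^{*})+O((x-x^{*})^{2})$ and using that the $\mu_{n}$-moment of $(x-x^{*})^{j}$ is $O(n^{-j/2})$ then gives $n\operatorname{Cov}_{\mu_{n}}(x,x^{p})\to p(x^{*})^{p-1}/|\ell''(x^{*})|$. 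For the middle equality I would note that $\psi=\ell(x^{*})$ is real-analytic there (see~\cite{Radin}), and that the envelope identity $\partial_{\beta_{1}}\psi=x^{*}$ together with differentiating $\ell'(x^{*})=0$ in $\beta_{2}$ gives $\partial_{\beta_{2}}x^{*}=p(x^{*})^{p-1}/|\ell''(x^{*})|$.

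On the curve away from the critical point, Theorem~\ref{trans_curve} gives two nondegenerate global maximizers $x_{1}^{*}<x_{2}^{*}$ with $\ell(x_{1}^{*})=\ell(x_{2}^{*})$. I would Laplace-expand around each to show that, up to $o(1)$ in total variation, $\mu_{n}$ is a two-atom mixture $w_{1}\mu_{n}^{(1)}+w_{2}\mu_{n}^{(2)}$ with $\mu_{n}^{(i)}$ concentrating at $x_{i}^{*}$ and, from the ratio of the Laplace prefactors $(x_{i}^{*}(1-x_{i}^{*})|\ell''(x_{i}^{*})|)^{-1/2}$, $w_{i}=A_{j}/(A_{1}+A_{2})$ where $A_{i}:=\sqrt{x_{i}^{*}(1-x_{i}^{*})|\ell''(x_{i}^{*})|}$ and $\{i,j\}=\{1,2\}$. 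By the law of total covariance, $\operatorname{Cov}_{\mu_{n}}(x,x^{p})$ equals the $w$-average of the within-component covariances, which is $O(n^{-1})$, plus the covariance of the conditional means, which tends to $w_{1}w_{2}(x_{1}^{*}-x_{2}^{*})((x_{1}^{*})^{p}-(x_{2}^{*})^{p})$; hence $n^{-1}\frac{\partial^{2}}{\partial\beta_{1}\partial\beta_{2}}\psi_{n}=\operatorname{Cov}_{\mu_{n}}(x,x^{p})\to\frac{A_{1}A_{2}}{(A_{1}+A_{2})^{2}}(x_{1}^{*}-x_{2}^{*})((x_{1}^{*})^{p}-(x_{2}^{*})^{p})$, which is the asserted expression after unwinding the $A_{i}$.

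At the critical point $x^{*}=(p-1)/p$, $\beta_{2}=\beta_{2}^{c}$, one has $\ell''(x^{*})=\ell'''(x^{*})=0$; I would compute $\ell^{(4)}(x^{*})=-p^{5}/(p-1)^{2}<0$, so $\mu_{n}$ concentrates at $x^{*}$ on scale $n^{-1/4}$ with weight $\propto e^{-nc(x-x^{*})^{4}(1+o(1))}$, $c=p^{5}/(24(p-1)^{2})$. Rescaling by $(nc)^{-1/4}$ and using $\int_{\mathbb R}u^{2}e^{-u^{4}}\,du/\int_{\mathbb R}e^{-u^{4}}\,du=\Gamma(3/4)/\Gamma(1/4)$ gives $n^{1/2}\operatorname{Var}_{\mu_{n}}(x)\to c^{-1/2}\Gamma(3/4)/\Gamma(1/4)$, and the same linearization of $x^{p}$ gives $n^{1/2}\operatorname{Cov}_{\mu_{n}}(x,x^{p})\to p(x^{*})^{p-1}c^{-1/2}\Gamma(3/4)/\Gamma(1/4)$; substituting $c^{-1/2}=2\sqrt6\,(p-1)/p^{5/2}$ and $(x^{*})^{p-1}=(p-1)^{p-1}/p^{p-1}$ and simplifying yields $\frac{2\sqrt6\,\Gamma(3/4)}{\Gamma(1/4)}\frac{(p-1)^{p}}{p^{p+1/2}}$. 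The hard part throughout is making these Laplace and local-limit expansions rigorous with explicit error control: one must dominate the tails of $\mu_{n}$ away from the maximizer(s) and, at the critical point, the non-symmetric $O((x-x^{*})^{5})$ and prefactor corrections, precisely enough that $\operatorname{Cov}_{\mu_{n}}(x,x^{p})=p(x^{*})^{p-1}\operatorname{Var}_{\mu_{n}}(x)(1+o(1))$ with error of the right order; these uniform estimates are exactly what Section~\ref{ESTIMATES} supplies, so I would quote them here.
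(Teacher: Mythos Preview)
Your proposal is correct and at its core uses the same ingredients as the paper: the product/row-independence structure (Proposition~\ref{Zn}), the sum-to-integral approximation (Proposition~\ref{E}), and Laplace's method (Proposition~\ref{laplace}). The paper writes $\partial^{2}_{\beta_{1}\beta_{2}}\psi_{n}$ directly as a quotient of tilted $W$-moments and feeds Propositions~\ref{E} and~\ref{laplace} into that quotient; your reformulation as $n\operatorname{Cov}_{\mu_{n}}(x,x^{p})$ for the single-row tilted law $\mu_{n}$ is equivalent but packages the computation more transparently.

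The only genuine methodological difference is in how the leading cancellation is organized. On the curve the paper expands the moment quotients from~\eqref{Moments} and simplifies algebraically, whereas you invoke the law of total covariance for the two-peak mixture, which makes the product form $w_{1}w_{2}(x_{1}^{*}-x_{2}^{*})((x_{1}^{*})^{p}-(x_{2}^{*})^{p})$ immediate. At the critical point the paper uses algebraic identities among the Taylor coefficients $d_{j}^{(k)}$ of $x^{k}/\sqrt{x(1-x)}$ (namely $d_{0}^{(p+1)}d_{0}^{(0)}=d_{0}^{(1)}d_{0}^{(p)}$ and its derivative) to force the cancellation down to $p(x^{*})^{p-1}\gamma_{3}/\gamma_{1}$, whereas you linearize $x^{p}$ about $x^{*}$ and reduce to $p(x^{*})^{p-1}\operatorname{Var}_{\mu_{n}}(x)$, dismissing $\operatorname{Cov}_{\mu_{n}}(x,(x-x^{*})^{2})$ as lower order by the symmetry of the limiting quartic weight. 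Both routes land on the same answer; yours is conceptually cleaner, but the paper's has the advantage that the needed error control is already contained verbatim in the expansion of Proposition~\ref{laplace}(iii), while your linearization requires one extra line to bound the odd-moment contribution coming from the $b_{5}$ and prefactor corrections.
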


See Figure~2 for a comparison 
of these results with 
Monte Carlo simulation.
By Theorems~\ref{MainThm}--\ref{starvariance} and~\eqref{derivs2}, the variances 
of $e(X)$ and $s(X)$
are order $n^{-2}$ off
 the transition curve, 
 order $n^{-1}$ on 
the phase transition curve 
away from the critical 
point, and order
$n^{-3/2}$ at the 
critical point. In 
particular, the variances 
of $e(X)$ and $s(X)$ 
vanish on the transition 
curve as $n \to \infty$.

\begin{theorem}\label{marginaldensities}
Off the phase transition curve and at the critical point,
\begin{equation}
\lim_{n\rightarrow\infty}\mathbb{P}_{n}(X_{12}=1)
=x^{\ast}.
\end{equation}
On the phase transition curve except at the critical point,
\begin{equation}
\lim_{n\rightarrow\infty}\mathbb{P}_{n}(X_{12}=1)
=\alpha x_{1}^{\ast}+(1-\alpha)x_{2}^{\ast},
\end{equation}
where
\begin{equation}\label{alphaEqn}
\alpha:=\frac{\sqrt{
x_{2}^{\ast}(1-x_{2}^{\ast})|\ell''(x_{2}^{\ast})|}}
{\sqrt{x_{1}^{\ast}(1-x_{1}^{\ast})|\ell''(x_{1}^{\ast})|}
+\sqrt{x_{2}^{\ast}(1-x_{2}^{\ast})|\ell''(x_{2}^{\ast})|}}.
\end{equation}
\end{theorem}

To put our results in perspective, we compare with 
the undirected case. Since 
the behavior of the directed 
and undirected models off 
the phase transition curve is 
similar, our discussion focuses on the phase 
transition curve.

In the undirected case 
(see Theorem 3.4 of~\cite{Radin}), on the phase transition curve away from the critical point, for large $n$ a typical graph looks like a sample from $G(n,p^{\ast})$, 
where $p^{\ast}$ is a distribution on the two global maximizers $x^{\ast}_{1}<x^{\ast}_{2}$ of 
$\ell$. This distribution 
is not specified in~\cite{Radin}. 
However, it is easy to check 
that the proof of Theorem~\ref{marginaldensities} goes 
through
for undirected graphs.  
In particular, since 
$\alpha \in (0,1)$ and 
$x_1^\ast < x_2^\ast$, the distribution 
of $p^{\ast}$ is nontrivial 
(i.e., not deterministic), which 
indicates phase coexistence.
Since for large $n$ a graph 
looks like a sample from either $G(n,x_1^\ast)$ 
or $G(n,x_2^\ast)$, both 
with positive probability,  
the variances of $e(X)$ and 
$s(X)$ do not vanish. 
This is expected 
along first order phase 
transitions, as discussed 
in Section~\ref{1.3} above.

The situation in our directed 
graph model is qualitatively 
different. Let $\vec{G}(n,p)$ be 
the directed Erd\H{o}s-R\'{e}nyi 
graph on $n$ nodes, in which there 
is a directed edge 
between each ordered pair of nodes 
with probability $p$. 
Along the phase transition 
curve, since the variances of $e(X)$ 
and $s(X)$ vanish as $n \to \infty$, a typical large graph 
in our model does {\em not} behave like 
$\vec{G}(n,p^\ast)$ with $p^\ast$ 
sampled from 
a nontrivial distribution on 
$x_1^*$ and $x_2^*$. Thus, 
there is no phase coexistence 
in the sense described above. 
We do not have a more 
precise result 
about graph structure 
along the transition curve, 
but we suspect the 
following is true. 
For large $n$, on the phase 
transition curve away 
from the critical point, 
a typical graph is 
``bipodal'': there 
is a node set of 
size
$\approx \alpha n$ 
in which each node has 
an outward edge to any 
other node with probability
$\approx x_1^*$, and another 
node set of size 
$\approx (1-\alpha)n$ 
in which each node has 
an outward edge to any 
other node with
probability 
$\approx x_2^*$. 
Indeed, a similar result has 
been found in a closely 
related model~\cite{AristoffZhu}.
See also~\cite{Aristoff}.

Finally, we obtain the asymptotics of the joint 
distribution of two directed edges, e.g. $X_{12}$ and $X_{34}$
or $X_{12}$ and $X_{13}$. 

\begin{theorem}\label{marginaldensities2}
(i) Off the phase transition curve and at the critical point,
\begin{equation*}
\lim_{n\rightarrow\infty}\mathbb{P}_{n}(X_{12}=1,X_{34}=1)
=(x^{\ast})^{2}.
\end{equation*}
On the phase transition curve except at the critical point,
\begin{equation*}
\lim_{n\rightarrow\infty}\mathbb{P}_{n}(X_{12}=1,X_{34}=1)
=(\alpha x_{1}^{\ast}+(1-\alpha)x_{2}^{\ast})^{2},
\end{equation*}
where $\alpha$ is defined in \eqref{alphaEqn}.

(ii) Off the phase transition curve and at the critical point,
\begin{equation*}
\lim_{n\rightarrow\infty}\mathbb{P}_{n}(X_{12}=1,X_{13}=1)
=(x^{\ast})^{2}.
\end{equation*}
On the phase transition curve except at the critical point,
\begin{equation*}
\lim_{n\rightarrow\infty}\mathbb{P}_{n}(X_{12}=1,X_{13}=1)
=\alpha(x_{1}^{\ast})^{2}+(1-\alpha)(x_{2}^{\ast})^{2},
\end{equation*}
where $\alpha$ is defined in \eqref{alphaEqn}.
\end{theorem}

\begin{remark}
(i) The result in Theorem~\ref{marginaldensities2} (i) holds
for any $\lim_{n\rightarrow\infty}\mathbb{P}_{n}(X_{1i}=1,X_{jk}=1)$
as long as $j\neq 1$ and $i\neq 1$, $k\neq j$.

(ii) The formulas for $\lim_{n\rightarrow\infty}\mathbb{P}_{n}(X_{12}=1,X_{34}=0)$
and $\lim_{n\rightarrow\infty}\mathbb{P}_{n}(X_{12}=1,X_{13}=0)$ etc. follow
directly from Theorem~\ref{marginaldensities} and Theorem~\ref{marginaldensities2}.

(iii) The results in Theorem~\ref{marginaldensities2} show that
two distinct edges are asymptotically independent off the phase transition
curve and at the critical point, and if two directed edges do not share the same root,
then they are asymptotically independent even at the critical point.
\end{remark}

\begin{figure}
\begin{center}
\includegraphics[scale=0.7]{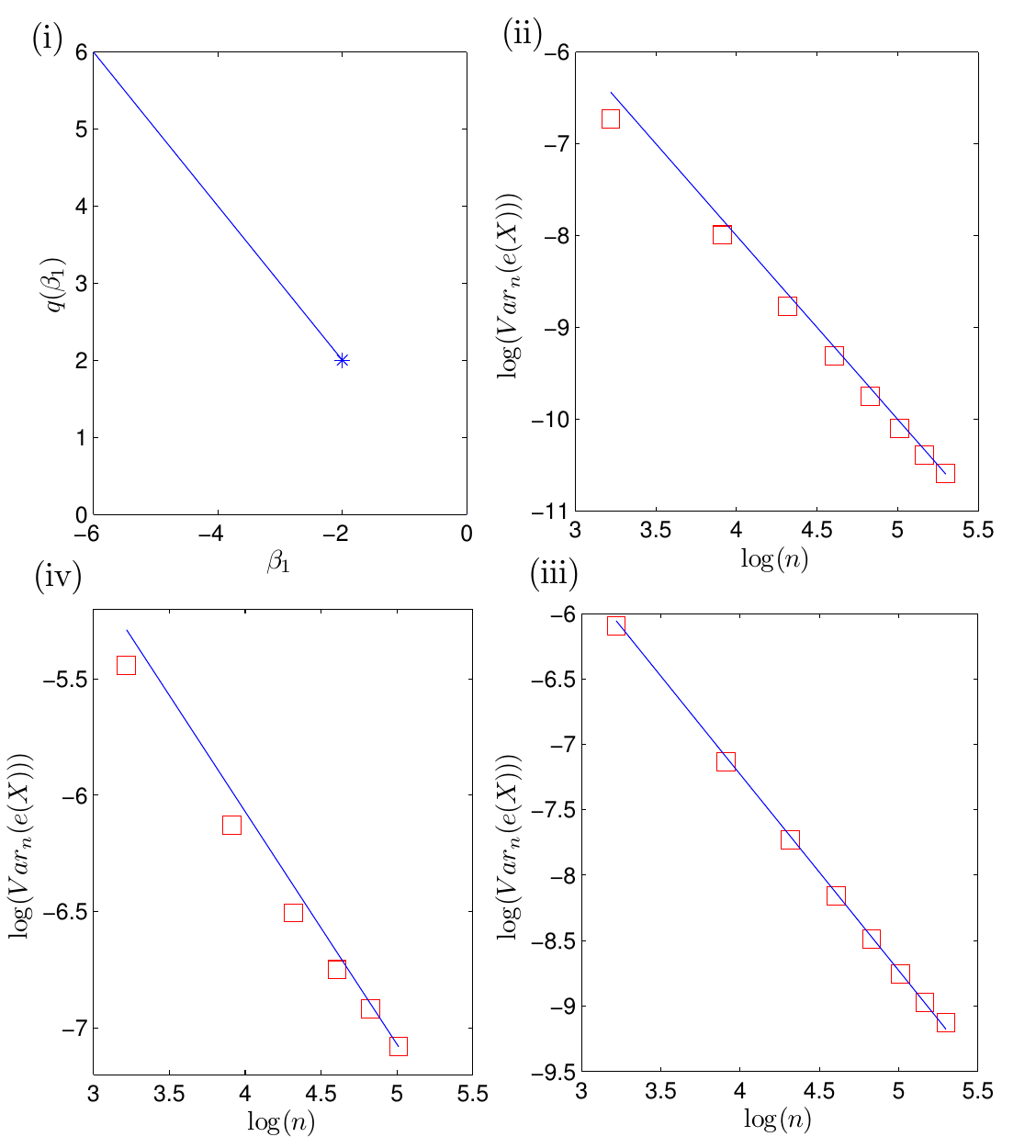}
\end{center}
\caption{(i): The graph of the phase transition curve $\beta_2 = q(\beta_1)$ 
when $p = 2$, with the critical point labeled by $*$. (ii)-(iv): Scaling of the variance of $e(X)$ (ii) off the 
phase transition curve, (iii) at the critical point, and (iv) on the phase transition 
curve away from the critical point. For (ii)-(iv) we use $p = 2$ and 
$(\beta_1,\beta_2)$ values of 
$(-3/2,3/2)$, $(-2,2)$ and $(-5/2,5/2)$, respectively. The straight lines are 
obtained from the scaling in Theorem~\ref{MainThm}, and the 
squares are obtained by Monte Carlo simulation.}
\label{fig2}
\end{figure}

\section{Key Estimates}\label{ESTIMATES}

First we have the following formula for the 
normalization $Z_n(\beta_1,\beta_2)$:
\begin{proposition}\label{Zn}
Let $W$ be a binomial random variable 
with parameters $n$ and $\frac{1}{2}$: 
\begin{equation*}
\mathbb{P}(W = i)= 2^{-n}\binom{n}{i}. 
\end{equation*}
Then 
\begin{equation*}
 Z_{n}(\beta_{1},\beta_{2}) = 
2^{n^2} \left({\mathbb E}\left[\exp\left(\beta_{1} W + \frac{\beta_{2}}{n^{p-1}}W^p\right)\right]\right)^n.
\end{equation*}
\end{proposition}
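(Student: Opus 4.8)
The plan is to expand the normalization directly from its definition,
\begin{equation*}
Z_n(\beta_1,\beta_2) = \sum_{X\in\{0,1\}^{n\times n}} \exp\left[n^2\bigl(\beta_1 e(X) + \beta_2 s(X)\bigr)\right],
\end{equation*}
and to observe that the exponent is additive over the rows of $X$. Writing $d_i := \sum_{j=1}^n X_{ij}$ for the out-degree of node $i$, the definitions in~\eqref{es} (together with the alternative expression for $s(X)$) give $n^2 e(X) = \sum_{i=1}^n d_i$ and $n^2 s(X) = n^{1-p}\sum_{i=1}^n d_i^{p}$. Hence
\begin{equation*}
Z_n(\beta_1,\beta_2) = \sum_{X\in\{0,1\}^{n\times n}} \prod_{i=1}^n \exp\left(\beta_1 d_i + \frac{\beta_2}{n^{p-1}}\, d_i^{p}\right).
\end{equation*}

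Next I would use that summing over $X\in\{0,1\}^{n\times n}$ amounts to summing independently over each row $X_{i\cdot}\in\{0,1\}^n$, and that the $i$th factor above depends only on the $i$th row. The sum of the product therefore factors into a product of $n$ identical row-sums:
\begin{equation*}
Z_n(\beta_1,\beta_2) = \left( \sum_{(x_1,\dots,x_n)\in\{0,1\}^n} \exp\left(\beta_1 \sum_{j=1}^n x_j + \frac{\beta_2}{n^{p-1}}\Bigl(\sum_{j=1}^n x_j\Bigr)^{p}\right) \right)^{n}.
\end{equation*}

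Finally I would group the terms of the inner sum by the value $k = \sum_{j} x_j \in \{0,1,\dots,n\}$: there are exactly $\binom{n}{k}$ binary strings of length $n$ with sum $k$, so the inner sum equals $\sum_{k=0}^n \binom{n}{k}\exp\bigl(\beta_1 k + \beta_2 k^{p}/n^{p-1}\bigr)$. Inserting a factor $2^n\cdot 2^{-n}$ and recalling $\mathbb{P}(W=k)=2^{-n}\binom{n}{k}$ identifies this with $2^n\,\mathbb{E}\bigl[\exp(\beta_1 W + \beta_2 W^{p}/n^{p-1})\bigr]$, and raising to the $n$th power yields the claimed formula.

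There is no serious obstacle here; this is an elementary identity. The one point that deserves to be stated clearly is that both homomorphism densities $e(X)$ and $s(X)$ are additive functions of the out-degree sequence $(d_1,\dots,d_n)$, which is exactly what makes the row-by-row factorization legitimate, and that the convention allowing $X_{ii}\in\{0,1\}$ is what keeps each row an unrestricted element of $\{0,1\}^n$ (so that the multiplicities are the full binomial coefficients). Everything else is bookkeeping with the $n^{p-1}$ normalization.
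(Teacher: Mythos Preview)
Your proof is correct and follows essentially the same approach as the paper: both rely on the observation that the exponent is additive in the row out-degrees $d_i=\sum_j X_{ij}$, so the partition function factors across rows into $n$ identical one-row sums. The only cosmetic difference is that the paper phrases the argument probabilistically from the outset (writing $Z_n=2^{n^2}\mathbb{E}[\exp(n^2(\beta_1 e(Y)+\beta_2 s(Y)))]$ for a matrix $Y$ of i.i.d.\ Bernoulli$(1/2)$ entries and then factoring the expectation), whereas you work combinatorially and identify the binomial expectation at the end; the content is the same.
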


Next we approximate the expectation in Proposition~\ref{Zn} in terms of an integral:
\begin{proposition}\label{E}
Let $W$ be a binomial random variable 
with parameters $n$ and $\frac{1}{2}$. 
Then for any $r < 1$, 
\begin{align*}
&{\mathbb E}\left[W^k\exp\left(\beta_{1} W + \frac{\beta_{2}}{n^{p-1}}W^p\right)\right] 
\\
&= \begin{cases}  \left(1+O\left(n^{1/2-r}\right)\right)
n^k 2^{-n}\sqrt{\frac{n}{2\pi}}\int_{0}^{1} 
\sqrt{\frac{x^{2k}}{x(1-x)}}e^{n\ell(x)}\,dx, & (\beta_1,\beta_2) \ne 
(\beta_1^c, \beta_2^c)\\
  \left(1+O\left(n^{1/4-r}\right)\right)
n^k 2^{-n}\sqrt{\frac{n}{2\pi}}\int_{0}^{1} 
\sqrt{\frac{x^{2k}}{x(1-x)}}e^{n\ell(x)}\,dx, &(\beta_1,\beta_2) = 
(\beta_1^c, \beta_2^c) \end{cases}
\end{align*}
\end{proposition}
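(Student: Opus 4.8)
The plan is to start from the exact binomial representation of the expectation, apply Stirling's formula to the binomial coefficients after the rescaling $i=nx$, recognize the resulting sum as a Riemann sum for the stated integral, and then control every error by a Laplace-type estimate concentrated near the global maximizer(s) of $\ell$.

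First I would use $\mathbb{P}(W=i)=2^{-n}\binom ni$ to write
\[
\mathbb{E}\!\left[W^k\exp\!\Bigl(\beta_1 W+\tfrac{\beta_2}{n^{p-1}}W^p\Bigr)\right]=\sum_{i=0}^{n}i^k\,2^{-n}\binom ni\,e^{\beta_1 i+\beta_2 i^p/n^{p-1}}.
\]
Fix $\delta\in(0,1)$ small enough that every global maximizer of $\ell$ — which by Theorem~\ref{trans_curve} lies in $(0,1)$ — is contained in $(2\delta,1-2\delta)$. The terms with $i/n\notin[\delta,1-\delta]$, and likewise the part of $\int_0^1$ over $[0,\delta]\cup[1-\delta,1]$, are handled by crude bounds: on that range $\ell$ is below $\max_{[0,1]}\ell$ by a fixed positive amount (the maximizers being interior), so these contributions are exponentially small relative to the main term. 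On $[\delta,1-\delta]$ I set $x=i/n$; Stirling's formula gives $2^{-n}\binom ni=(2\pi n\,x(1-x))^{-1/2}e^{n(-x\log x-(1-x)\log(1-x)-\log 2)}(1+O(n^{-1}))$ uniformly, and since $\beta_1 i+\beta_2 i^p/n^{p-1}=n(\beta_1 x+\beta_2 x^p)$ the exponent assembles into $e^{n(\ell(x)-\log 2)}$. Writing $f(x):=x^k(x(1-x))^{-1/2}e^{n\ell(x)}$, this reduces the claim to showing
\[
\tfrac1n\!\!\sum_{\delta n\le i\le(1-\delta)n}\!\! f(i/n)=\Bigl(1+O(\mathrm{err})\Bigr)\int_0^1 f(x)\,dx,
\]
the Stirling error contributing only an extra $O(n^{-1})$ relative factor.

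The core step is the Riemann-sum/integral comparison. On $[\delta,1-\delta]$ the function $f$ and its derivatives have no boundary blow-up, so the elementary bound $\bigl|\tfrac1n\sum f(i/n)-\int_{\delta}^{1-\delta}f\bigr|\le\tfrac1n\int_{\delta}^{1-\delta}|f'|$ applies; and $f'=(O(1)+n\ell')f$, so the right side is $O(n^{-1})J_n+\int|\ell'(x)|f(x)\,dx$, where $J_n:=\int_0^1 f$. A Laplace estimate then shows $\int|\ell'|f$ is much smaller than $J_n$: near a nondegenerate maximizer $x^*$ (a property that can be read off from the analysis in~\cite{Radin} underlying Theorem~\ref{trans_curve}) one has $|\ell'(x)|\asymp|x-x^*|$ and $\ell(x)-\ell(x^*)\asymp-(x-x^*)^2$, whence $\int|\ell'|f\asymp n^{-1}e^{n\ell(x^*)}$ while $J_n\asymp n^{-1/2}e^{n\ell(x^*)}$, giving a relative error $O(n^{-1/2})$. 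At the critical point $(\beta_1^c,\beta_2^c)$ one first verifies from the explicit coordinates in Theorem~\ref{trans_curve} that $\ell''(x^*)=\ell'''(x^*)=0$ and $\ell''''(x^*)<0$ (so $x^*=(p-1)/p$ is a degenerate maximizer of order four); then $|\ell'(x)|\asymp|x-x^*|^3$, $\ell(x)-\ell(x^*)\asymp-(x-x^*)^4$, $J_n\asymp n^{-1/4}e^{n\ell(x^*)}$, and the relative error becomes $O(n^{-3/4})$. Collecting everything, $\mathbb{E}[\,\cdot\,]=n^k 2^{-n}\sqrt{n/2\pi}\,J_n\bigl(1+O(n^{-1/2})\bigr)$ off the critical point and $\bigl(1+O(n^{-3/4})\bigr)$ at it, which in particular yields the stated bounds with $O(n^{1/2-r})$, resp. $O(n^{1/4-r})$, for every $r<1$. (Equivalently, one may localize to a window $|i-nx^*|\le n^{r}$ with $r\in(1/2,1)$ about each maximizer and Taylor-expand $\ell$ there, which produces the $r$-dependent form directly.)

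The main obstacle is the Laplace analysis at the critical point: the order-four degeneracy forces the usual $n^{-1/2}$ scaling of $J_n$ to be replaced by $n^{-1/4}$, so one must establish the vanishing of $\ell''(x^*)$ and $\ell'''(x^*)$ together with $\ell''''(x^*)<0$ from the explicit critical coordinates, and then bound the fourth-order Taylor remainder uniformly over the relevant window so that the sum and the integral are still seen to be asymptotically proportional. A secondary, purely bookkeeping complication is that on the phase transition curve away from the critical point $\ell$ has two nondegenerate maximizers, so the localization and the Riemann-sum estimate must be carried out near each of them; this does not affect the error rates.
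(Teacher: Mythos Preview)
Your proposal is correct and takes a genuinely different route from the paper. Both arguments begin identically: write out the binomial sum, discard the boundary range $i/n\notin[\delta,1-\delta]$ by the crude entropy bound, and apply Stirling on the interior to reduce to comparing a Riemann sum of $f(x)=x^k(x(1-x))^{-1/2}e^{n\ell(x)}$ with $\int_0^1 f$. The divergence is in how the Riemann-sum error is controlled. The paper introduces a shrinking window $B_n=\{i:|i/n-x^*|<n^{-q}\}$ (with $q\in(1/3,1/2)$ off the critical point, $q\in(1/5,1/4)$ at it), and on each mesh interval bounds $|e^{n\ell(x)}-e^{n\ell(y)}|$ by repeated mean-value expansions, splitting into $j\in B_n$ and $j\notin B_n$; summing these pointwise bounds and optimizing over $q$ produces the absolute error $e^{n\ell(x^*)}O(n^{-r})$ for every $r<1$. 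You instead invoke the single global bound $\bigl|\tfrac1n\sum f(i/n)-\int f\bigr|\le\tfrac1n\int|f'|$, observe $f'/f=O(1)+n\ell'$ on $[\delta,1-\delta]$, and then use one Laplace estimate for $\int|\ell'|f$ relative to $\int f$ (exploiting $|\ell'|\asymp|x-x^*|^m$ with $m=1$ or $3$). Your method is shorter, avoids the window parameter $q$ entirely, and in fact delivers the sharp relative errors $O(n^{-1/2})$ and $O(n^{-3/4})$ rather than merely $O(n^{1/2-r})$ and $O(n^{1/4-r})$ for $r<1$; the paper's formulation with arbitrary $r<1$ is an artifact of its window argument. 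The paper's approach, on the other hand, is more hands-on and makes the localization completely explicit, which may be pedagogically clearer. Your remark that Proposition~\ref{order} (the vanishing of $\ell'',\ell'''$ and negativity of $\ell^{(4)}$ at the critical point) must be verified from the explicit coordinates is exactly what the paper does as a separate proposition before this proof.
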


Lastly we give a technical lemma
for computing the integral in Proposition~\ref{E}:
\begin{proposition}\label{laplace}
Let $f$ be an analytic function in $(0,1)$ with 
Taylor expansion at $c \in (0,1)$ given by 
\begin{equation*}
 f(x) = d_0(c) + d_1(c)(x-c) + d_2(c)(x-c)^2 + \ldots, \quad d_j(c) := \frac{f^{(j)}(c)}{j!}.
\end{equation*}
For $c \in (0,1)$, define
\begin{align}\begin{split}\label{bda}
 &b_k(c) =  \frac{\ell^{k}(c)}{k!},\\
 &{\alpha}_k(c) =  \Gamma\left(\frac{k}{2}\right)|b_2(c)|^{-k/2},\\
 &{\gamma}_k(c) =  \frac{1}{2}\Gamma\left(\frac{k}{4}\right)|b_4(c)|^{-k/4}.\end{split}
\end{align}
Assume that $f(x)e^{n\ell(x)} \in L^1[0,1]$ for each $n$. 
Then as $n\rightarrow\infty$, we have the following.
\begin{itemize}
\item[(i)] Off the phase transition curve, 
\begin{equation*}
\int_0^1 f(x) e^{n\ell(x)}\,dx = e^{n\ell(c)}\left[n^{-1/2}d_0\alpha_1 +
 n^{-3/2}\Lambda + O(n^{-5/2})\right]
\end{equation*}
where 
\begin{equation*}
\Lambda := d_2\alpha_3 + d_1 b_3\alpha_5 + d_0 b_4 \alpha_5 + \frac{1}{2}d_0 b_3^2 \alpha_7
\end{equation*}
with $c = x^*$ the unique maximizer 
of $\ell$, and $d_j = d_j(c)$, $b_j = b_j(c)$, $\alpha_j = \alpha_j(c)$. 
\item[(ii)] On the phase transition curve except 
at the critical point, 
\begin{equation*}
\int_0^1 f(x) e^{n\ell(x)}\,dx = 
e^{n\ell(c)}\left[n^{-1/2}\left(d_0(c_1)\alpha_1(c_1) +
 d_0(c_2)\alpha_1(c_2)\right) + O(n^{-3/2})\right]
\end{equation*}
where $c_1$ and $c_2$ are the maximizers of $\ell$.
\item[(iii)] At the critical point, 
\begin{equation*}
\int_0^1 f(x) e^{n\ell(x)}\,dx = e^{n\ell(c)}\left[n^{-1/4}d_0\gamma_1 +
n^{-3/4}\Theta  + O(n^{-5/4})\right]
\end{equation*}
where 
\begin{equation*}
\Theta := d_2\gamma_3 + d_1 b_5\gamma_7 + 
d_0 b_6 \gamma_7 + \frac{1}{2}d_0 b_5^2 \gamma_{11}
\end{equation*}
with $c = x^*$ the unique maximizer 
of $\ell$, and $d_j = d_j(c)$, $b_j = b_j(c)$, $\gamma_j = \gamma_j(c)$.
\end{itemize}
\end{proposition}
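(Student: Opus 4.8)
The plan is to establish all three parts by the Laplace method carried to second order; the cases differ only in the local structure of $\ell$ at its maximizer(s): a nondegenerate interior maximum in (i), two such maxima in (ii), and a degenerate (quartic) maximum in (iii). I use Theorem~\ref{trans_curve} to locate the maximizer(s) together with the analyticity of $\ell$ on $(0,1)$, and I use that the maximizer is nondegenerate off the curve ($\ell''(x^*)<0$) while at the critical point $\ell''(x^*)=\ell'''(x^*)=0$ and $\ell^{(4)}(x^*)<0$ — a direct computation at $x^*=(p-1)/p$ with $\beta_2^c=p^{p-1}/(p-1)^p$, also implicit in~\cite{Radin}.

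\emph{Localization.} Fix small $\delta>0$. Since $\sup_{|x-c|\ge\delta}\ell<\ell(c)-\varepsilon$ for some $\varepsilon>0$ and $fe^{n\ell}\in L^1$, the contribution of $\{|x-c|\ge\delta\}$ (a $\delta$-ball around each of $c_1,c_2$ in case (ii)) is $O(e^{n(\ell(c)-\varepsilon)})$, negligible against $n^{-N}e^{n\ell(c)}$ for every $N$. On $\{|x-c|<\delta\}$ I split further at an intermediate scale $|x-c|=n^{-r}$: on the annulus $n^{-r}\le|x-c|<\delta$ the bound $\ell(x)-\ell(c)\le -c_0(x-c)^2$ off the curve (resp. $\le -c_0(x-c)^4$ at the critical point) makes the contribution exponentially smaller than $n^{-1/2}e^{n\ell(c)}$ (resp. $n^{-1/4}e^{n\ell(c)}$) once $r<1/2$ (resp. $r<1/4$); on $\{|x-c|<n^{-r}\}$ the cubic-and-higher (resp. quintic-and-higher) Taylor terms of $n\ell$ are $O(n^{1-3r})$ (resp. $O(n^{1-5r})$), hence $o(1)$ once $r>1/3$ (resp. $r>1/5$). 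Any $r\in(1/3,1/2)$, resp. $r\in(1/5,1/4)$, works.

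\emph{Rescale and expand.} On $\{|x-c|<n^{-r}\}$ substitute $x=c+tn^{-1/2}$ in (i), (ii) and $x=c+tn^{-1/4}$ in (iii), so that $n[\ell(x)-\ell(c)]=b_2 t^2 + b_3 t^3 n^{-1/2} + b_4 t^4 n^{-1} + \cdots$ (resp. $b_4 t^4 + b_5 t^5 n^{-1/4} + b_6 t^6 n^{-1/2}+\cdots$). Peel off the Gaussian factor $e^{b_2 t^2}$ (resp. the quartic factor $e^{b_4 t^4}$), Taylor-expand the rest of the exponent, Taylor-expand $f(c+tn^{-1/2})$ (resp. $f(c+tn^{-1/4})$), multiply out keeping relative order up to $n^{-1}$ (resp. $n^{-1/2}$), extend the $t$-integral to $\mathbb{R}$ at an exponentially small cost, and integrate term by term: odd powers of $t$ integrate to $0$, even powers give exactly the moments $\alpha_k(c)$ (resp. $\gamma_k(c)$). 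The $n^0$ term yields $d_0$ times the leading moment; the half-order correction vanishes by parity; and the relative order-$n^{-1}$ (resp. $n^{-1/2}$) terms come from $d_2 t^2$, the cross term $d_1 b_3 t^4$ (resp. $d_1 b_5 t^6$), the exponent term $d_0 b_4 t^4$ (resp. $d_0 b_6 t^6$), and $\tfrac12 d_0 b_3^2 t^6$ (resp. $\tfrac12 d_0 b_5^2 t^{10}$) — reproducing $\Lambda$ (resp. $\Theta$) with $O(n^{-5/2})$ (resp. $O(n^{-5/4})$) remainder. Part (ii) then follows at once by adding the part-(i)-type leading contributions near $c_1$ and near $c_2$.

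\emph{Main obstacle.} The one genuinely delicate point is making the term-by-term integration rigorous with the claimed error order. I would write the rescaled integrand as the finite sum above plus a Taylor/exponential remainder, dominate the remainder on $\{|x-c|<n^{-r}\}$ uniformly in $t$ by an integrable majorant of the form $C|t|^M e^{-\tfrac12|b_2|t^2}$ (resp. $C|t|^M e^{-\tfrac12|b_4|t^4}$), and check that after multiplying by the relevant power of $n$ it integrates to $o$ of the claimed order; analyticity of $\ell$ and continuity of $f$ near $c$ supply the uniform Taylor bounds, and the choice of $r$ keeps the non-Gaussian part of the exponent under control. (Alternatively, a Morse-type change of variables $\ell(c)-\ell(x)=u^2$, resp. $=u^4$, reduces the integral to $\int g(u)e^{-nu^2}\,du$ with $g$ analytic at $0$, after which Watson's lemma applies cleanly — at the cost of re-expressing $g^{(j)}(0)$ through the $d_j,b_j$.) Case (iii) is harder only because the effective expansion parameter is $n^{-1/4}$, so more terms must be tracked and the annulus estimate handled more carefully; structurally it mirrors case (i) with $(b_2,\alpha_k)$ replaced by $(b_4,\gamma_k)$ and the cubic term by the quintic.
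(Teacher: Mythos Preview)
Your proposal is correct and follows essentially the same Laplace-method argument as the paper: localize near the maximizer(s), Taylor-expand $f$ and $\ell$, strip off the quadratic (resp.\ quartic) part of $n\ell$, expand the remainder of the exponential, and read off the moments $\alpha_k$ (resp.\ $\gamma_k$) from the Gaussian (resp.\ quartic) integrals $\int_{-\infty}^\infty t^k e^{-|b_2|t^2}\,dt$ (resp.\ $\int t^k e^{-|b_4|t^4}\,dt$), with part (ii) obtained by superposing two part-(i) contributions. The paper works in the unscaled variable $u=x-c$ on a fixed window $[-\delta,\delta]$ and invokes dominated convergence to justify term-by-term integration, whereas you rescale to $t=(x-c)n^{1/2}$ (resp.\ $n^{1/4}$) and introduce the intermediate cutoff $|x-c|=n^{-r}$; your version is in fact more careful about producing the claimed remainder orders $O(n^{-5/2})$, $O(n^{-5/4})$, which the paper's appeal to dominated convergence does not by itself yield.
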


Note that this strategy allows for a relatively 
precise computation 
of $Z_n(\beta_1,\beta_2)$. Unfortunately, 
arbitrary precision cannot 
be achieved, due to the error inherent in 
the sum to integral approximation of
Proposition~\ref{E}.

\section{Proofs}\label{PROOFS}

Before turning to the proofs of the theorems in Section~\ref{THEOREMS}, 
we will prove the estimates in Section~\ref{ESTIMATES}.
The following result will be needed in almost all of our proofs.
\begin{proposition}\label{order}
Off the phase transition curve,
\begin{equation*}
\ell'(x^*) = 0,\quad \ell''(x^*) < 0.
\end{equation*}
On the phase transition curve except at the critical point, 
\begin{equation*}
\ell'(x_1^*) = \ell'(x_2^*) = 0,\quad \ell''(x_1^*)<0, \quad \ell''(x_2^*) < 0.
\end{equation*}
At the critical point,
\begin{equation*}
\ell'(x^*)=\ell''(x^*) 
= \ell'''(x^*) = 0, \quad \ell^{(4)}(x^*)=\frac{-p^{5}}{(p-1)^{2}}< 0.
\end{equation*}
\end{proposition}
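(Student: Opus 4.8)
The plan is to extract all three claims from elementary calculus applied to
\[
\ell(x) = \beta_1 x + \beta_2 x^p - x\log x - (1-x)\log(1-x),
\]
whose derivatives are
\[
\ell'(x) = \beta_1 + p\beta_2 x^{p-1} - \log x + \log(1-x), \qquad
\ell''(x) = p(p-1)\beta_2 x^{p-2} - \frac{1}{x} - \frac{1}{1-x},
\]
and so on. The first two statements are essentially immediate from Theorem~\ref{trans_curve}: any global maximizer of $\ell$ lying in the open interval $(0,1)$ is a critical point, so $\ell'(x^\ast)=0$ (respectively $\ell'(x_1^\ast)=\ell'(x_2^\ast)=0$), and at an interior global maximizer one automatically has $\ell''\le 0$. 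The only thing to rule out is $\ell''=0$ at such a point when we are off the curve or at a simple maximizer on the curve. For this I would invoke the analysis in~\cite{Radin}: the phase transition curve is precisely the locus where $\ell$ has two global maxima, and the critical point is where these coalesce; away from that coalescence each maximizer is nondegenerate. Concretely, one can argue that if $\ell'(x)=\ell''(x)=0$ simultaneously at an interior point, then (solving these two equations for $\beta_1,\beta_2$) the point $(\beta_1,\beta_2)$ must be the critical point and $x=(p-1)/p$; this is exactly the computation that pins down $(\beta_1^c,\beta_2^c)$, so off the critical point $\ell''(x^\ast)\ne 0$, hence $<0$.

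For the critical point itself, the strategy is a direct computation at the candidate maximizer $x^\ast = (p-1)/p$. First I would verify that with
\[
\beta_2^c = \frac{p^{p-1}}{(p-1)^p}, \qquad \beta_1^c = \log(p-1) - \frac{p}{p-1},
\]
one indeed has $\ell'(x^\ast)=0$ and $\ell''(x^\ast)=0$: the second equation gives $p(p-1)\beta_2^c (x^\ast)^{p-2} = 1/x^\ast + 1/(1-x^\ast) = p^2/(p-1)$, which forces $\beta_2^c$ to the stated value once $x^\ast=(p-1)/p$; then $\ell'(x^\ast)=0$ determines $\beta_1^c$. Next, differentiate once more:
\[
\ell'''(x) = p(p-1)(p-2)\beta_2 x^{p-3} - \frac{1}{(1-x)^2} + \frac{1}{x^2},
\]
and evaluate at $x^\ast=(p-1)/p$. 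Here $1/(x^\ast)^2 = p^2/(p-1)^2$ and $1/(1-x^\ast)^2 = p^2$, and the first term is $p(p-1)(p-2)\beta_2^c (x^\ast)^{p-3}$; substituting $\beta_2^c$ and simplifying the powers of $p$ and $p-1$ should give $\ell'''(x^\ast) = p^2 (p-2)/(p-1) \cdot \frac{1}{p-1} - p^2 + p^2/(p-1)^2$, which one checks collapses to $0$. Finally,
\[
\ell^{(4)}(x) = p(p-1)(p-2)(p-3)\beta_2 x^{p-4} - \frac{2}{(1-x)^3} - \frac{2}{x^3},
\]
and the same substitution yields the stated value $-p^5/(p-1)^2$; in particular it is strictly negative.

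I expect the main obstacle to be the justification, in the first two cases, that the interior global maximizers are nondegenerate (i.e.\ that $\ell''\ne 0$ there), since this is the one point that genuinely uses the structure of the phase transition curve rather than raw differentiation. The cleanest route is: a point where $\ell'=\ell''=0$ must be an inflection-type critical point, and solving $\ell'=\ell''=0$ shows the only interior solution occurs at $x=(p-1)/p$ with $(\beta_1,\beta_2)=(\beta_1^c,\beta_2^c)$ — this is exactly how~\cite{Radin} locates the critical point — so off the critical point no interior maximizer can have $\ell''=0$, and on the curve away from the critical point the two distinct maximizers $x_1^\ast<(p-1)/p<x_2^\ast$ likewise cannot coincide with $(p-1)/p$. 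The remaining work — checking $x^\ast=(p-1)/p$ really is the maximizer at criticality, and the four successive derivative evaluations — is bookkeeping with powers of $p$ and $p-1$, routine but worth doing carefully to land the constant $-p^5/(p-1)^2$ exactly.
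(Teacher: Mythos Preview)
Your plan for the critical-point computations (evaluating $\ell'$, $\ell''$, $\ell'''$, $\ell^{(4)}$ at $x^\ast=(p-1)/p$) is fine and matches the paper. The gap is in your argument for nondegeneracy off the critical point. You claim that ``solving $\ell'=\ell''=0$ shows the only interior solution occurs at $x=(p-1)/p$ with $(\beta_1,\beta_2)=(\beta_1^c,\beta_2^c)$.'' This is false: for \emph{every} $x\in(0,1)$ the pair of equations $\ell'(x)=\ell''(x)=0$ has a solution in $(\beta_1,\beta_2)$, namely $\beta_2=\frac{1}{p(p-1)x^{p-1}(1-x)}$ and then $\beta_1$ determined by $\ell'(x)=0$. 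So two equations in the three unknowns $(\beta_1,\beta_2,x)$ cut out a curve, not a point, and that curve is not the phase transition curve.

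Your route can be repaired with one extra observation: if $x^\ast$ is a local (hence global) maximizer with $\ell'(x^\ast)=\ell''(x^\ast)=0$, then necessarily $\ell'''(x^\ast)=0$ as well, since otherwise $\ell(x)-\ell(x^\ast)\sim \tfrac{1}{6}\ell'''(x^\ast)(x-x^\ast)^3$ changes sign. Adding this third equation and eliminating $\beta_2$ between $\ell''=0$ and $\ell'''=0$ gives $(p-2)(1-x)=2x-1$, i.e.\ $x=(p-1)/p$, and then $\beta_1,\beta_2$ are forced to the critical values. The paper instead does a case analysis on $\beta_2$ versus $\beta_2^c$ via the auxiliary function $m(x)=\frac{1}{p(p-1)x^{p-1}(1-x)}$, locating the zeros of $\ell''$ and showing the global maximizers always lie in the region where $\ell''<0$; either argument works once the missing $\ell'''=0$ step is supplied.
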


\begin{proof}
It is straightforward to compute that
\begin{align*}
&\ell'(x)=\beta_{1}+p\beta_{2}x^{p-1}-\log\left(\frac{x}{1-x}\right),
\\
&\ell''(x)=p(p-1)\beta_{2}x^{p-2}-\frac{1}{x}-\frac{1}{1-x},
\\
&\ell'''(x)=p(p-1)(p-2)\beta_{2}x^{p-3}
+\frac{1}{x^{2}}-\frac{1}{(1-x)^{2}},
\\
&\ell^{(4)}(x)=p(p-1)(p-2)(p-3)\beta_{2}x^{p-4}
-\frac{2}{x^{3}}-\frac{2}{(1-x)^{3}}.
\end{align*}
Since $\lim_{x\rightarrow 0^{+}}\ell'(x)=+\infty$
and $\lim_{x\rightarrow 1^{-}}\ell'(x)=-\infty$, the maximum
is achieved at a local maximum, we have $\ell'(x^{\ast})=0$.

Let us first show that $\ell''(x^{\ast})<0$ off the critical point 
(where $x^*$ denotes either $x_1^*$ or $x_2^*$ if we are on the 
phase transition curve).
Following the proof of Proposition 3.2 in Radin and Yin~\cite{Radin},
we first analyze the properties of $\ell''(x)$. 
We can re-write $\ell''(x)$ as
\begin{equation}
\ell''(x)=x^{p-2}p(p-1)\left[\beta_{2}-\frac{1}{p(p-1)x^{p-1}(1-x)}\right].
\end{equation}
Consider the function
\begin{equation}
m(x):=\frac{1}{p(p-1)x^{p-1}(1-x)}.
\end{equation}
It is easy to observe that $m(x)\geq\frac{p^{p-1}}{(p-1)^{p}}$
and the equality holds if and only if $x=\frac{p-1}{p}$.

(i) If $\beta_{2}<\frac{p^{p-1}}{(p-1)^{p}}$, $\ell''(x)<0$ on $[0,1]$
and in particular $\ell''(x^{\ast})<0$.

(ii) If $\beta_{2}>\frac{p^{p-1}}{(p-1)^{p}}$, there exist $0<x_{1}<\frac{p-1}{p}<x_{2}<1$
so that $\ell''(x)<0$ on $0<x<x_{1}$, $\ell''(x)>0$ on $x_{1}<x<x_{2}$
and $\ell''(x)<0$ on $x_{2}<x<1$. Moreover $\ell''(x_{1})=\ell''(x_{2})=0$.
If $\ell'(x_{1})\geq 0$, $\ell(x)$ has a unique local and hence global maximizer $x^{\ast}>x_{2}$;
if $\ell'(x_{2})\leq 0$,  $\ell(x)$ has a unique local and hence global maximizer 
$x^{\ast}<x_{1}$. Finally, if $\ell'(x_{1})<0<\ell'(x_{2})$, then $\ell(x)$
has two local maximizers $x_{1}^{\ast}$ and $x_{2}^{\ast}$ 
so that $x_{1}^{\ast}<x_{1}<\frac{p-1}{p}<x_{2}<x_{2}^{\ast}$. Since $\ell''$ vanishes
only at $x_{1}$ and $x_{2}$, we have proved that $\ell''(x^{\ast})<0$.

(iii) If $\beta_{2}=\frac{p^{p-1}}{(p-1)^{p}}$, $\ell''(x)\leq 0$ on $[0,1]$
and $\ell''(x)=0$ if and only if $x=\frac{p-1}{p}$ by the properties of $m(x)$.
Therefore, $\ell''(x^{\ast})=0$ if and only if $x^{\ast}=\frac{p-1}{p}$.
Since $\ell'(x^{\ast})=0$, $x^{\ast}=\frac{p-1}{p}$ if and only if
\begin{equation}
\beta_{1}=-p\frac{p^{p-1}}{(p-1)^{p}}\left(\frac{p-1}{p}\right)^{p-1}
+\log\left(\frac{\frac{p-1}{p}}{1-\frac{p-1}{p}}\right)=\beta_{1}^{c},
\end{equation}
Hence $\ell''(x^{\ast})<0$ off the critical point
and $\ell''(x^{\ast})=0$ at the critical point.

Furthermore, at the critical point $(\beta_{1},\beta_{2})=(\beta_{1}^{c},\beta_{2}^{c})$,
we can compute that 
\begin{equation*}
\ell'''(x^{\ast})=p(p-1)(p-2)\frac{p^{p-1}}{(p-1)^{p}}\frac{(p-1)^{p-3}}{p^{p-3}}
+\frac{p^{2}}{(p-1)^{2}}-p^{2}=0.
\end{equation*}
Moreover,
\begin{align*}
\ell^{(4)}(x^{\ast})&=p(p-1)(p-2)(p-3)\frac{p^{p-1}}{(p-1)^{p}}\frac{(p-1)^{p-4}}{p^{p-4}}
-\frac{2p^{3}}{(p-1)^{3}}-2p^{3}
\\
&=\frac{-p^{5}}{(p-1)^{2}}<0.
\end{align*}
\end{proof}

The next three proofs are for the results in 
Section~\ref{ESTIMATES}.
\begin{proof}[Proof of Proposition~\ref{Zn}]
 Let $Y = (Y_{ij})_{1\le i,j \le n}$ be an $n\times n$ matrix 
of i.i.d. Bernoulli random variables: 
\begin{equation*}
 {\mathbb P}(Y_{ij} = 0) = \frac{1}{2} = {\mathbb P}(Y_{ij} = 1).
\end{equation*}
For $i=1,\ldots,n$ define 
\begin{equation*}
 W_i = \sum_{j=1}^n Y_{ij}.
\end{equation*}
Then
\begin{align*}
 Z_{n}(\beta_{1},\beta_{2}) &= 2^{n^2}\,{\mathbb E}\left[\exp\left(n^2(\beta_{1} e(Y) + \beta_{2} s(Y))\right)\right]\\
&= 2^{n^2}\,{\mathbb E}\left[\exp\left(\sum_{i=1}^n \beta_{1} W_i + \frac{\beta_{2}}{n^{p-1}}W_i^p\right)\right]\\
&= 2^{n^2}\,{\mathbb E}\left[\prod_{i=1}^{n}\exp\left(\beta_{1} W_i + \frac{\beta_{2}}{n^{p-1}}W_i^p\right)\right]\\
&= 2^{n^2}\,\prod_{i=1}^{n}{\mathbb E}\left[\exp\left(\beta_{1} W_i + \frac{\beta_{2}}{n^{p-1}}W_i^p\right)\right]\\
&= 2^{n^2} \left({\mathbb E}\left[\exp\left(\beta_{1} W + \frac{\beta_{2}}{n^{p-1}}W^p\right)\right]\right)^n.
\end{align*}
\end{proof}

\begin{proof}[Proof of Proposition~\ref{E}]
We will prove only the case $k = 0$, as the 
other cases are easy extensions. Observe that 
\begin{equation*}
 {\mathbb E}\left[\exp\left(\beta_{1} W + \frac{\beta_{2}}{n^{p-1}}W^p\right)\right] = 
2^{-n} \sum_{i=1}^n \binom{n}{i} \exp\left(\beta_{1}i + \frac{\beta_{2}}{n^{p-1}}i^p\right).
\end{equation*}
Using the fact that for all $n\ge 1$, 
\begin{equation*}
 n\log n - n + \frac{1}{2}\log n \le \log n! \le  n\log n - n + \frac{1}{2}\log n + 1,
\end{equation*}
we obtain
\begin{equation}\label{bin1}
 \binom{n}{i} \le \exp\left(n\left[-\frac{i}{n}\log \frac{i}{n} - 
\left(1-\frac{i}{n}\right)\log \left(1-\frac{i}{n}\right)
+ \frac{1}{2n}\log \frac{n}{i(n-i)}\ + \frac{1}{n}\right]\right).
\end{equation}
Define 
\begin{equation*}
A_n = \{i\in \{1,\ldots,n\}\,:\,i/n \in (\varepsilon, 1 - \varepsilon)\}
\end{equation*}
where $\varepsilon>0$ will be specified momentarily.
From~\eqref{bin1}, for any $\varepsilon \in (0,1)$ we have
\begin{align}\begin{split}\label{binbound}
\max_{i \in \{1,\ldots,n\}\setminus A_n} \binom{n}{i}
\exp\left(\beta_{1} i + \frac{\beta_{2}}{n^{p-1}}i^p\right) 
&\le e\left(1-\frac{1}{n}\right)^{-\frac{1}{2}} \sup_{x \in [0,1]\setminus 
(\varepsilon, 1-\varepsilon)}
e^{n\ell(x)}\\
&\le 3\sup_{x \in [0,1]\setminus 
(\varepsilon, 1-\varepsilon)}
e^{n\ell(x)}\end{split}
\end{align}
Since $\ell'(x) \to \infty$ as $x \to 0$ and $\ell'(x) \to -\infty$ as $x \to 1$, 
the optimizer $x_*$ is in $(0,1)$ and 
we may choose $\varepsilon > 0$ such that for some $\delta > 0$,
\begin{equation*}
 \sup_{x \in [0,1]\setminus(\varepsilon,1-\varepsilon)}\ell(x) < \ell(x^*) - \delta.
\end{equation*}
Thus, $\sup_{x \in [0,1]\setminus (\varepsilon,1-\varepsilon)} e^{n\ell(x)} \le e^{n(\ell(x^*)-\delta)}$, and 
using this with~\eqref{binbound} gives
\begin{equation*}
 \sum_{i \in \{1,\ldots,n\}\setminus A_n}\binom{n}{i}
\exp\left(\beta_{1} i + \frac{\beta_{2}}{n^{p-1}}i^p\right) 
= O\left(e^{n(\ell(x^*) - \delta)}\right).
\end{equation*}
For $i \in A_n$, Stirling's formula allows us to write 
\begin{align*}
\binom{n}{i}&= \left(1+O\left(n^{-1}\right)\right)\frac{1}{\sqrt{2\pi}}\sqrt{\frac{n}{i(n-i)}}
\\
&\qquad\qquad\qquad
\times\exp\left(n\left[-\frac{i}{n}\log\frac{i}{n}- 
\left(1-\frac{i}{n}\right)\log \left(1-\frac{i}{n}\right)\right]\right).
\end{align*}
The last two displays yield
\begin{align}\begin{split}\label{mainexp}
&{\mathbb E}\left[\exp\left(\beta_{1} W + \frac{\beta_{2}}{n^{p-1}}W^p\right)\right]
\\
&=2^{-n} \left(\sum_{i=1}^n\binom{n}{i}
\exp\left(\beta_{1} i + \frac{\beta_{2}}{n^{p-1}}i^p\right)\right)\\
&= 2^{-n} \left(O\left(e^{n(\ell(x^*) - \delta)}\right) + \sum_{i \in A_n}\binom{n}{i}
\exp\left(\beta_{1} i + \frac{\beta_{2}}{n^{p-1}}i^p\right)\right)\\
&= 2^{-n}   \left(O\left(e^{n(\ell(x^*) - \delta)}\right) + \left(1+O\left(n^{-1}\right)\right)
\frac{1}{\sqrt{2\pi n}}\sum_{i \in A_n}
\sqrt{\frac{1}{(i/n)(1-i/n)}}e^{n\ell(i/n)} \right).\end{split}
\end{align}
We will approximate the sum in~\eqref{mainexp} by 
an integral. Consider first the case off the transition curve. 
Thus, there is a unique maximizer $x^*$ of $\ell$, and 
$\ell'(x^*) = 0$, $\ell''(x^*) < 0$. 
Let $q \in (1/3,1/2)$ and define
\begin{equation*}
B_n = \{i \in \{1,\ldots,n\}\,:\,i/n \in (x^*-n^{-q},x^*+n^{-q})\}.
\end{equation*}
For any $j \in A_n$, note that 
\begin{align}\begin{split}\label{err1}
&\left|\frac{1}{n}\sqrt{\frac{1}{(j/n)(1-j/n)}}e^{n\ell(j/n)} 
- \int_{j/n}^{j/n+1/n} \sqrt{\frac{1}{x(1-x)}}e^{n\ell(x)}\,dx\right| 
\\
&\le \frac{1}{n}\max_{x,y \in [j/n,\,j/n+1/n]} \left|\sqrt{\frac{1}{x(1-x)}}e^{n\ell(x)} - 
\sqrt{\frac{1}{y(1-y)}}e^{n\ell(y)}\right| \\
&\le \frac{1}{n}\max_{x\in [j/n,\,j/n+1/n]} \sqrt{\frac{1}{x(1-x)}}
\max_{x,y \in [j/n,\,j/n+1/n]}\left|e^{n\ell(x)}- e^{n\ell(y)}\right|\\
&\quad + \frac{1}{n}e^{n\ell(x^*)}\max_{x,y \in [j/n,\,j/n+1/n]}
\left|\sqrt{\frac{1}{x(1-x)}} - \sqrt{\frac{1}{y(1-y)}}\right|\\
&= O(n^{-1})\max_{x,y \in [j/n,\,j/n+1/n]} \left|e^{n\ell(x)} - e^{n\ell(y)}\right| 
+O(n^{-2})e^{n\ell(x^*)}.\end{split}
\end{align}
Fix $j \in A_n$ and let $x, y \in [j/n,j/n+1/n]$. Note that 
for all $x$, 
\begin{equation*}
|e^x -1 | \le e^{|x|}-1.
\end{equation*}
We use this, the fact that $\ell''(x^*) < 0$, 
and the mean value theorem to write
\begin{align}\begin{split}\label{err2}
\left|e^{n\ell(x)} - e^{n\ell(y)}\right| &= e^{n\ell(x^*)}e^{n(\ell(y)-\ell(x^*))}
\left|e^{n(\ell(x)-\ell(y))} - 1\right|\\
&= e^{n\ell(x^*)}\exp\left(n\frac{\ell''(x^*)}{2}(y-x^*)^2 + n\frac{\ell'''(\xi)}{6}(y-x^*)^3\right) \\
&\quad \times\left|\exp\left(n\ell'(y)(x-y) + \frac{n\ell''(\nu)}{2}(x-y)^2\right)-1\right|\\
&= e^{n\ell(x^*)}\exp\left(n\frac{\ell''(x^*)}{2}(y-x^*)^2 + n\frac{\ell'''(\xi)}{6}(y-x^*)^3\right) \\
&\quad \times\left|\exp\left(n\ell''(\zeta)(y-x^*)(x-y) + \frac{n\ell''(\nu)}{2}(x-y)^2\right)-1\right|\\
&\le e^{n\ell(x^*)}\exp\left(n\frac{-|\ell''(x^*)|}{2}(y-x^*)^2 + n\frac{\ell'''(\xi)}{6}(y-x^*)^3\right) \\
&\quad \times\left(\exp\left(n|\ell''(\zeta)||y-x^*||x-y| + \frac{n|\ell''(\nu)|}{2}(x-y)^2\right)-1\right)\end{split}
\end{align}
where $\xi$ and $\zeta$ are between $y$ and $x^*$, and 
$\nu$ is between $y$ and $x$. 
Observe that 
\begin{align*}
&\exp\left(n\frac{-|\ell''(x^*)|}{2}(y-x^*)^2 + n\frac{\ell'''(\xi)}{6}(y-x^*)^3\right)\\
&= \begin{cases}
O\left(\exp\left(-\frac{|\ell''(x^*)|}{2}n^{1-2q}\right)\right)\left(1+O(n)\right), & 
j \notin B_n\\
1+O(n^{1-3q}), & j \in B_n
\end{cases}
\end{align*}
and that 
\begin{align*}
&\exp\left(n|\ell''(\zeta)||y-x^*||x-y| + \frac{n|\ell''(\nu)|}{2}(x-y)^2\right)-1\\
&= \begin{cases}
O(1), & 
j \notin B_n\\
O(n^{-q}), & j \in B_n
\end{cases}
\end{align*}
Let $t = 1-2q > 0$ 
and $\omega \in(0, |\ell''(x^*)|/2)$. 
The last three displays show that
\begin{equation*}
 \max_{x,y \in [j/n,\,j/n+1/n]}\left|e^{n\ell(x)} - e^{n\ell(y)}\right| 
=  \begin{cases} 
e^{n\ell(x^*)}O(\exp(-\omega n^t)), & j \notin B_n \\
e^{n\ell(x^*)}O(n^{-q}), 
& j \in B_n \end{cases},
\end{equation*}
and so from~\eqref{err1}, 
\begin{align}\begin{split}\label{diff}
 &\left|\frac{1}{n}\sqrt{\frac{1}{(j/n)(1-j/n)}}e^{n\ell(j/n)} 
- \int_{j/n}^{j/n+1/n} \sqrt{\frac{1}{x(1-x)}}e^{n\ell(x)}\,dx\right| \\
&=  \begin{cases} 
e^{n\ell(x^*)}O(\exp(-\omega n^t)), & j \notin B_n \\
e^{n\ell(x^*)}O(n^{-1-q}), 
& j \in B_n \end{cases}.\end{split}
\end{align}
Observe that 
\begin{equation}\label{AnBn}
|B_n| = O(n^{1-q}),\quad |A_n\setminus B_n| = O(n).
\end{equation}
Now from~\eqref{diff}, for any $r < 1$, 
\begin{align}\begin{split}\label{sumbound}
&\left|\frac{1}{n}\sum_{i \in A_n}
\sqrt{\frac{1}{(i/n)(1-i/n)}}e^{n\ell(i/n)} - {\int_0^{1} 
\sqrt{\frac{1}{x(1-x)}}e^{n\ell(x)}dx}\right| \\
&\le e^{n\ell(x^*)}\left(|B_n|\,O(n^{-1-q}) + 
|A_n\setminus B_n|\,O(\exp(-\omega n^{t}))\right)
\\
&\qquad\qquad\qquad\qquad+ {\int_{[0,1]\backslash[\varepsilon+1/n,1-\varepsilon-1/n]} 
\sqrt{\frac{1}{x(1-x)}}e^{n\ell(x)}dx}\\
&\le e^{n\ell(x^*)}\left(O(n^{-2q}) + O(n\exp(-\omega n^{t}))\right) + O\left(e^{n(\ell(x^*) - \delta)}\right)\\
&\le e^{n\ell(x^*)}O(n^{-r}).\end{split}
\end{align}
Now by~\eqref{sumbound} and Proposition~\ref{laplace},
\begin{align*}
&\left|\frac{1}{n}\sum_{i \in A_n}
\sqrt{\frac{1}{(i/n)(1-i/n)}}e^{n\ell(i/n)} - {\int_0^{1} 
\sqrt{\frac{1}{x(1-x)}}e^{n\ell(x)}dx}\right|\\
&\quad \times\left( \int_{0}^{1} 
\sqrt{\frac{1}{x(1-x)}}e^{n\ell(x)}\,dx\right)^{-1} = O(n^{1/2-r}).
\end{align*}
Thus,
\begin{equation*}
\frac{1}{n} \sum_{i \in A_n}
\sqrt{\frac{1}{(i/n)(1-i/n)}}e^{n\ell(i/n)} 
=  \left(1+O\left(n^{1/2-r}\right)\right)\int_{0}^{1} 
\sqrt{\frac{1}{x(1-x)}}e^{n\ell(x)}\,dx.
\end{equation*}
Now from~\eqref{mainexp} we conclude
\begin{align*}
&{\mathbb E}\left[\exp\left(\beta_{1} W + \frac{\beta_{2}}{n^{p-1}}W^p\right)\right]\\
&=  \left(1+O\left(n^{1/2-r}\right)\right)
2^{-n}\sqrt{\frac{n}{2\pi}}\int_{0}^{1} 
\sqrt{\frac{1}{x(1-x)}}e^{n\ell(x)}\,dx.
\end{align*}

Next, consider $(\beta_1,\beta_2)$ 
on the transition curve away from the critical point. 
By Theorem~\ref{trans_curve}, 
there are two maximizers of $\ell$, say $x_1^*$ and $x_2^*$. 
Defining
\begin{equation*}
B_n = \{i \in \{1,\ldots,n\}\,:\,i/n \in (x_1^*-n^{-q},x_1^*+n^{-q})\cup (x_2^*-n^{-q},x_2^*+n^{-q})\}, 
\end{equation*}
it is not hard to see that the arguments above can be 
repeated to obtain the same result. 

Finally, consider the case at the critical point. 
Here, equation~\eqref{err1} still holds, but~\eqref{err2} 
needs to be modified, as follows. 
By Proposition~\ref{order}, we have $\ell'(x^*) = \ell''(x^*) = \ell'''(x^*) = 0$ 
and $\ell^{(4)}(x^*) < 0$, so by the mean value theorem we have 
\begin{align}\begin{split}
&\left|e^{n\ell(x)} - e^{n\ell(y)}\right| \\
&= e^{n\ell(x^*)}e^{n(\ell(y)-\ell(x^*))}
\left|e^{n(\ell(x)-\ell(y))} - 1\right|\\
&= e^{n\ell(x^*)}\exp\left(n\frac{\ell^{(4)}(x^*)}{4!}(y-x^*)^4 + n\frac{\ell^{(5)}(\xi)}{5!}(y-x^*)^5\right) \\
&\quad \times\left|\exp\left(n\ell'(y)(x-y) + \frac{n\ell''(\nu)}{2}(x-y)^2\right)-1\right|\\
&= e^{n\ell(x^*)}\exp\left(n\frac{\ell^{(4)}(x^*)}{4!}(y-x^*)^4 + n\frac{\ell^{(5)}(\xi)}{5!}(y-x^*)^5\right) \\
&\quad \times\left|\exp\left(n\ell^{(4)}(\zeta)(v-x^*)(u-x^*)(y-x^*)(x-y) + \frac{n\ell''(\nu)}{2}(x-y)^2\right)-1\right|\\
&\le e^{n\ell(x^*)}\exp\left(n\frac{-|\ell^{(4)}(x^*)|}{4!}(y-x^*)^4 + n\frac{\ell^{(5)}(\xi)}{5!}(y-x^*)^5\right) \\
&\quad \times\left(\exp\left(n|\ell^{(4)}(\zeta)||v-x^*||u-x^*||y-x^*||x-y| + \frac{n|\ell''(\nu)|}{2}(x-y)^2\right)-1\right)\end{split}
\end{align}
where $u$, $v$, $\xi$ and $\zeta$ are between $y$ and $x^*$, and 
$\nu$ is between $y$ and $x$. 
Let $B_{n}$ be defined as in the analysis of the case off
the critical curve.
Let $q \in (1/5,1/4)$ and note that 
\begin{align*}
&\exp\left(n\frac{-|\ell^{(4)}(x^*)|}{4!}(y-x^*)^4 + n\frac{\ell^{(5)}(\xi)}{5!}(y-x^*)^5\right)\\
&= \begin{cases}
O\left(\exp\left(\frac{-|\ell^{(4)}(x^*)|}{4!}n^{1-4q}\right)\right)
\left(1 + O(n)\right), & j \notin B_n\\ 
1 + O(n^{1-5q}), & j \in B_n
\end{cases}
\end{align*}
and that 
\begin{align*}
&\exp\left(n|\ell^{(4)}(\zeta)||v-x^*||u-x^*||y-x^*||x-y| + \frac{n|\ell''(\nu)|}{2}(x-y)^2\right)-1\\
&= \begin{cases} O(1),& j \notin B_n \\
O(n^{-3q}),& j \in B_n.\end{cases}
\end{align*}
Let $\omega \in (0, |\ell^{(4)}(x^*)|/4!)$ and 
$t = 1-4q > 0$. The last three displays show that
\begin{equation*}
 \max_{x,y \in [j/n,\,j/n+1/n]}\left|e^{n\ell(x)} - e^{n\ell(y)}\right| 
=  \begin{cases} 
e^{n\ell(x^*)}O(\exp(-\omega n^t)), & j \notin B_n \\
e^{n\ell(x^*)}O(n^{-3q}), 
& j \in B_n \end{cases}. 
\end{equation*}
So from~\eqref{err1},
\begin{align}\begin{split}\label{diff2}
 &\left|\frac{1}{n}\sqrt{\frac{1}{(j/n)(1-j/n)}}e^{n\ell(j/n)} 
- \int_{j/n}^{j/n+1/n} \sqrt{\frac{1}{x(1-x)}}e^{n\ell(x)}\,dx\right| \\
&=  \begin{cases} 
e^{n\ell(x^*)}O(\exp(-\omega n^t)), & j \notin B_n \\
e^{n\ell(x^*)}O(n^{-1-3q}), 
& j \in B_n \end{cases}.\end{split}
\end{align}
Using~\eqref{diff2} and~\eqref{AnBn},  
for any $r < 1$,
\begin{align}\begin{split}\label{sumbound2}
&\left|\frac{1}{n}\sum_{i \in A_n}
\sqrt{\frac{1}{(i/n)(1-i/n)}}e^{n\ell(i/n)} - {\int_0^{1} 
\sqrt{\frac{1}{x(1-x)}}e^{n\ell(x)}dx}\right| \\
&\le e^{n\ell(x^*)}\left(|B_n|\,O(n^{-1-3q}) + 
|A_n\setminus B_n|\,O(\exp(-\omega n^{t}))\right)
\\
&\qquad\qquad\qquad\qquad+ {\int_{[0,1]\backslash[\varepsilon+1/n,1-\varepsilon-1/n]} 
\sqrt{\frac{1}{x(1-x)}}e^{n\ell(x)}dx}\\
&\le e^{n\ell(x^*)}\left(O(n^{-4q}) + O(\exp(-\omega n^{t}))\right) + O\left(e^{n(\ell(x^*) - \delta)}\right)\\
&\le e^{n\ell(x^*)}O(n^{-r}).\end{split}
\end{align}
Now by~\eqref{sumbound2} and Proposition~\ref{laplace},
\begin{align*}
&\left|\frac{1}{n}\sum_{i \in A_n}
\sqrt{\frac{1}{(i/n)(1-i/n)}}e^{n\ell(i/n)} - {\int_0^{1} 
\sqrt{\frac{1}{x(1-x)}}e^{n\ell(x)}dx}\right|\\
&\quad \times\left( \int_{0}^{1} 
\sqrt{\frac{1}{x(1-x)}}e^{n\ell(x)}\,dx\right)^{-1} = O(n^{1/4-r}).
\end{align*}
Thus, 
\begin{equation*}
\frac{1}{n} \sum_{i \in A_n}
\sqrt{\frac{1}{(i/n)(1-i/n)}}e^{n\ell(i/n)} 
=  \left(1+O\left(n^{1/4-r}\right)\right)\int_{0}^{1} 
\sqrt{\frac{1}{x(1-x)}}e^{n\ell(x)}\,dx.
\end{equation*}
Now from~\eqref{mainexp} we conclude 
\begin{align*}
&{\mathbb E}\left[\exp\left(\beta_{1} W + \frac{\beta_{2}}{n^{p-1}}W^p\right)\right]
\\
&=\left(1+O\left(n^{1/4-r}\right)\right)
2^{-n}\sqrt{\frac{n}{2\pi}}\int_{0}^{1} 
\sqrt{\frac{1}{x(1-x)}}e^{n\ell(x)}\,dx.
\end{align*}
\end{proof}

\begin{proof}[Proof of Proposition~\ref{laplace}]
We will prove only {(i)} and {(iii)}, as {(ii)} is 
standard. We first consider {(i)}. 
Note that for $b>0$ and $k \in {\mathbb N}$, 
\begin{equation*}
 \int_{-\infty}^{\infty} x^k e^{-bx^2}\,dx = \begin{cases} 0,& k \hbox{ odd }\\ 
\Gamma\left(\frac{k+1}{2}\right)b^{-\frac{k+1}{2}},
& k \hbox{ even }\end{cases}
\end{equation*}
So for any $\delta > 0$,  
\begin{align*}
\int_{-\delta}^{\delta} u^k e^{-nbu^2} \,du
&= n^{-\frac{k+1}{2}} \int_{-\delta n^{1/2}}^{\delta n^{1/2}} x^k e^{-bx^2}\,dx \\ 
 &= n^{-\frac{k+1}{2}}\left(O(e^{-bn}) + \int_{-\infty}^\infty x^k e^{-bx^2}\,dx\right) \\
&= \begin{cases} 0,& k \hbox{ odd }\\
\Gamma\left(\frac{k+1}{2}\right)(nb)^{-\frac{k+1}{2}} 
+ O(e^{-bn}) ,
& k \hbox{ even }.\end{cases}
\end{align*}
Now let $c = x^*$ and $u = x-c$, and 
pick $0<\delta < \min\{c,1-c\}$. We use Taylor expansions of  
$x^k$ and $\ell(x)$ 
at $c$ and of $e^x$ at zero, 
along with Proposition~\ref{order}, 
to compute
\begin{align*}
&\int_{c-\delta}^{c+\delta} f(x) e^{n\ell(x)}\,dx \\
&= \int_{-\delta}^{\delta}\left[d_0 + d_1u +\ldots\right]
e^{n(b_0 + b_1u +b_2 u^2 + \ldots)}\,du\\
&= e^{n\ell(c)}\int_{-\delta}^{\delta}\left[d_0 + d_1u +\ldots\right]
e^{nb_2 u^2 + nb_3u^3+ \ldots}\,du\\
&=e^{n\ell(c)}\int_{-\delta}^\delta\left[d_0 + d_1u +\ldots\right]
\left[1 + (nb_3u^3 + \ldots)  +\frac{1}{2}(nb_3u^3 + \ldots)^2+ \ldots\right]e^{nb_2 u^2}\,du\\
&= e^{n\ell(c)}\left[n^{-1/2}d_0\alpha_1 +
 n^{-3/2}\Lambda + O(n^{-5/2})\right]
\end{align*}
where the last step is obtained by collecting terms of 
the same order, and the interchange of sum and integral 
is justified by the dominated convergence theorem. Since 
$x^* = c$ is the unique global maximizer of $\ell$, we conclude that 
for some $\varepsilon > 0$,
\begin{equation*}
\int_{0}^{1} f(x) e^{n\ell(x)}\,dx 
= \int_{c-\delta}^{c+\delta} f(x) e^{n\ell(x)}\,dx 
+ O\left(e^{n(\ell(c)-\varepsilon)}\right).
\end{equation*}
It follows that 
\begin{equation*}
\int_{0}^{1} f(x) e^{n\ell(x)}\,dx 
= e^{n\ell(c)}\left[n^{-1/2}d_0\alpha_1 +
 n^{-3/2}\Lambda + O(n^{-5/2})\right].
\end{equation*}

Now we turn to {(iii)}. 
Note that for $b>0$ and $k \in {\mathbb N}$, 
\begin{equation*}
 \int_{-\infty}^{\infty} x^k e^{-bx^4}\,dx = \begin{cases} 0,& k \hbox{ odd }\\ 
\frac{1}{2}\Gamma\left(\frac{k+1}{4}\right)b^{-\frac{k+1}{4}},
& k \hbox{ even }\end{cases}.
\end{equation*}
So for any $\delta > 0$, 
\begin{align*}
\int_{-\delta}^\delta u^k e^{-nbu^4} \,du
&= n^{-\frac{k+1}{4}} \int_{-\delta n^{1/4}}^{\delta n^{1/4}} x^k e^{-bx^4}\,dx \\ 
 &= n^{-\frac{k+1}{4}}\left(O(e^{-bn}) + \int_{-\infty}^\infty x^k e^{-bx^4}\,dx\right) \\
&= \begin{cases} 0,& k \hbox{ odd }\\
\frac{1}{2}\Gamma\left(\frac{k+1}{4}\right)(nb)^{-\frac{k+1}{4}} + O(e^{-bn}) ,
& k \hbox{ even }\end{cases}.
\end{align*}
As before we let $c = x^*$ and $u = x-c$, 
pick $0< \delta = \min\{c,1-c\}$ and use 
Taylor expansions of $x^k$ and $\ell(x)$ 
at $c$ and $e^x$ at zero, along with
 Proposition~\ref{order}, to write 
\begin{align*}
&\int_{c-\delta}^{c+\delta} f(x) e^{n\ell(x)}\,dx\\
&= \int_{c-\delta}^{c+\delta}\left[d_0 + d_1u +\ldots\right]
e^{n(b_0 + b_1u +b_2 u^2 + \ldots)}\,du\\
&= e^{n\ell(c)}\int_{c-\delta}^{c+\delta}\left[d_0 + d_1u +\ldots\right]
e^{nb_4 u^4 + nb_5u^5+ \ldots}\,du\\
&=e^{n\ell(c)}\int_{c-\delta}^{c+\delta}\left[d_0 + d_1u +\ldots\right]
\left[1 + (nb_5u^5 + \ldots) +\frac{1}{2}(nb_5u^5 + \ldots)^2 + \ldots\right]e^{nb_4 u^4}\,du\\
&= e^{n\ell(c)}\left[n^{-1/4}d_0\gamma_1 +
 n^{-3/4}\Theta + O(n^{-5/4})\right],
\end{align*}
where again the last step is obtained by collecting terms of 
the same order, and the interchange of sum and integral 
is justified by the dominated convergence theorem. As 
before, since $x^* = c$ is the unique global maximizer of $\ell$, 
we can conclude that 
\begin{equation*}
\int_{0}^{1} f(x) e^{n\ell(x)}\,dx = 
e^{n\ell(c)}\left[n^{-1/4}d_0\gamma_1 +
 n^{-3/4}\Theta + O(n^{-5/4})\right].
\end{equation*}
\end{proof}

The remainder of the proofs are for the 
results in Section~\ref{THEOREMS}.
\begin{proof}[Proof of Theorem~\ref{free_energy}]
By Propositions~\ref{E} and~\ref{laplace}, we have
\begin{align}\begin{split}\label{long}
\psi_n(\beta_{1},\beta_{2}) &=  n^{-2} \log Z_n(\beta_{1},\beta_{2}) \\
&= \log 2 + n^{-1} \log {\mathbb E}
\left[\exp\left(\beta_{1} W + \frac{\beta_{2}}{n^{p-1}}W^p\right)\right] \\
&= O(n^{-1}\log n) + \frac{1}{n}\log \int_0^1 \sqrt{\frac{1}{x(1-x)}}e^{n\ell(x)}\,dx\\
&= O(n^{-1}\log n) + \ell(x^*).
\end{split}
\end{align}
\end{proof}

\begin{proof}[Proof of Theorem~\ref{PropertyCurve}]
(i) Along the phase transition curve, we have
\begin{align}
&\beta_{1}+pq(\beta_{1})(x_{1}^{\ast})^{p-1}-\log\left(\frac{x_{1}^{\ast}}{1-x_{1}^{\ast}}\right)=0,
\label{EqnI}
\\
&\beta_{1}+pq(\beta_{1})(x_{2}^{\ast})^{p-1}-\log\left(\frac{x_{2}^{\ast}}{1-x_{2}^{\ast}}\right)=0,
\label{EqnII}
\\
&\beta_{1}x_{1}^{\ast}+q(\beta_{1})(x_{1}^{\ast})^{p}
-x_{1}^{\ast}\log x_{1}^{\ast}-(1-x_{1}^{\ast})\log(1-x_{1}^{\ast})
\nonumber
\\
&\qquad\qquad
=\beta_{1}x_{2}^{\ast}+q(\beta_{1})(x_{2}^{\ast})^{p}
-x_{2}^{\ast}\log x_{2}^{\ast}-(1-x_{2}^{\ast})\log(1-x_{2}^{\ast}).
\label{EqnIII}
\end{align}
Let $x_1^*<x_2^*$ be the two local maximizers of $\ell$ in 
the V-shaped region~\cite{Radin} that contains 
the phase transition curve except the critical point. 
By Proposition~\ref{order}, $\ell''(x_1^*)$ and $\ell''(x_2^*)$ 
are nonzero away from the critical point. The implicit 
function theorem implies that then $x_{1}^{\ast}$
and $x_{2}^{\ast}$ are analytic functions of 
both $\beta_{1}$ and $\beta_2$. 
Differentiating \eqref{EqnIII} with respect to $\beta_{1}$ and using
\eqref{EqnI} and \eqref{EqnII}, we can show that
\begin{equation*}
x_{1}^{\ast}+q'(\beta_{1})(x_{1}^{\ast})^{p}
=
x_{2}^{\ast}+q'(\beta_{1})(x_{2}^{\ast})^{p},
\end{equation*}
which implies that 
\begin{equation}\label{qprime}
q'(\beta_{1})=-\frac{x_{1}^{\ast}-x_{2}^{\ast}}{(x_{1}^{\ast})^{p}-(x_{2}^{\ast})^{p}}.
\end{equation}
As $\beta_{1}\rightarrow\beta_{1}^{c}$, $x_{2}^{\ast}-x_{1}^{\ast}\rightarrow 0$
and both $x_{2}^{\ast}$ and $x_{1}^{\ast}$ converge to the common maximizer 
$x^{\ast}_{c}=\frac{p-1}{p}$. Therefore,
\begin{equation*}
\lim_{\beta_{1}\rightarrow\beta_{1}^{c}}q'(\beta_{1})
=-\frac{1}{p(x^{\ast}_{c})^{p-1}}=-\frac{p^{p-2}}{(p-1)^{p-1}}.
\end{equation*}
Since $x_{1}^{\ast}\rightarrow 0$ and $x_{2}^{\ast}\rightarrow 1$ as $\beta_{1}\rightarrow-\infty$,
we get $\lim_{\beta_{1}\rightarrow-\infty}q'(\beta_{1})=-1$.

(ii) Differentiating $q'(\beta_{1})$ with respect to $\beta_{1}$, we get
\begin{align}
q''(\beta_{1})&=
-\frac{1}{((x_{1}^{\ast})^{p}-(x_{2}^{\ast})^{p})^{2}}
\left[(1-p)(x_{1}^{\ast})^{p}
+p(x_{1}^{\ast})^{p-1}x_{2}^{\ast}-(x_{2}^{\ast})^{p}\right]
\frac{\partial x_{1}^{\ast}}{\partial\beta_{1}}
\nonumber
\\
&\qquad
-\frac{1}{((x_{1}^{\ast})^{p}-(x_{2}^{\ast})^{p})^{2}}
\left[(1-p)(x_{2}^{\ast})^{p}
+p(x_{2}^{\ast})^{p-1}x_{1}^{\ast}-(x_{1}^{\ast})^{p}\right]
\frac{\partial x_{2}^{\ast}}{\partial\beta_{1}}.\label{SecondDerivative}
\end{align}
Differentiating \eqref{EqnI} and \eqref{EqnII} with respect to $\beta_{1}$, we get
\begin{align}
&1+pq'(\beta_{1})(x_{1}^{\ast})^{p-1}
+\left[pq(\beta_{1})(p-1)(x_{1}^{\ast})^{p-2}-\frac{1}{x_{1}^{\ast}(1-x_{1}^{\ast})}\right]
\frac{\partial x_{1}^{\ast}}{\partial\beta_{1}}=0,\label{EqnIV}
\\
&1+pq'(\beta_{1})(x_{2}^{\ast})^{p-1}
+\left[pq(\beta_{1})(p-1)(x_{2}^{\ast})^{p-2}-\frac{1}{x_{2}^{\ast}(1-x_{2}^{\ast})}\right]
\frac{\partial x_{2}^{\ast}}{\partial\beta_{1}}=0.\label{EqnV}
\end{align}
Notice that, from~\eqref{qprime},
\begin{align}\begin{split}\label{EqnVI}
1+pq'(\beta_{1})(x_{1}^{\ast})^{p-1}
&=1-p\frac{x_{1}^{\ast}-x_{2}^{\ast}}{(x_{1}^{\ast})^{p}-(x_{2}^{\ast})^{p}}(x_{1}^{\ast})^{p-1} \\
&= 1 - \frac{p(x_1^*)^{p-1}}{(x_1^*)^{p-1}+(x_1^*)^{p-2}x_2^*+\ldots + x_1^*(x_2^*)^{p-2}+(x_2^*)^{p-1}} \\
&>  1 - \frac{p(x_1^*)^{p-1}}{(x_1^*)^{p-1}+(x_1^*)^{p-2}x_1^*+\ldots + x_1^*(x_1^*)^{p-2}+(x_1^*)^{p-1}}\\
&=0,
\end{split}
\end{align}
and analogously
\begin{equation}\label{EqnVII}
1+pq'(\beta_{1})(x_{2}^{\ast})^{p-1}
=1-p\frac{x_{1}^{\ast}-x_{2}^{\ast}}{(x_{1}^{\ast})^{p}-(x_{2}^{\ast})^{p}}(x_{2}^{\ast})^{p-1}
<0.
\end{equation}
Moreover, in Proposition \ref{order}, we showed that
\begin{align}
&\ell''(x_{1}^{\ast})
=pq(\beta_{1})(p-1)(x_{1}^{\ast})^{p-2}-\frac{1}{x_{1}^{\ast}(1-x_{1}^{\ast})}<0,\label{EqnVIII}
\\
&
\ell''(x_{2}^{\ast})
=pq(\beta_{1})(p-1)(x_{2}^{\ast})^{p-2}-\frac{1}{x_{2}^{\ast}(1-x_{2}^{\ast})}<0.\label{EqnIX}
\end{align}
Therefore, from \eqref{EqnIV}, \eqref{EqnV}, \eqref{EqnVI}, \eqref{EqnVII}, 
\eqref{EqnVIII} and \eqref{EqnIX}, 
we conclude that $\frac{\partial x_{1}^{\ast}}{\partial\beta_{1}}>0$
and $\frac{\partial x_{2}^{\ast}}{\partial\beta_{1}}<0$.
Finally, by noticing that in \eqref{SecondDerivative},
\begin{align*}
&(1-p)(x_{1}^{\ast})^{p}
+p(x_{1}^{\ast})^{p-1}x_{2}^{\ast}-(x_{2}^{\ast})^{p}<0,
\\
&(1-p)(x_{2}^{\ast})^{p}
+p(x_{2}^{\ast})^{p-1}x_{1}^{\ast}-(x_{1}^{\ast})^{p}>0,
\end{align*}
we conclude that $q''(\beta_{1})>0$.
\end{proof}

In the proofs below, let 
$d_m^{(n)}$ be defined as in Proposition~\ref{laplace} 
for the function 
\begin{equation*}
f(x) = \frac{x^n}{\sqrt{x(1-x)}}.
\end{equation*}

\begin{proof}[Proof of Theorem~\ref{MainThm}]
Off the phase transition curve, the result 
follows immediately from Theorem~\ref{free_energy} 
and results in~\cite{Radin}. 
Thus, we prove only the last two displays in 
Theorem~\ref{MainThm}.

>From the second line of~\eqref{long}, we have
\begin{align}\label{2nd_deriv}
 \frac{\partial^2}{\partial \beta_{1}^2}\psi_n(\beta_{1},\beta_{2}) 
&= n^{-1}\Bigg\{\frac{{\mathbb E}\left[W^2\exp\left(\beta_{1} W + \frac{\beta_{2}}{n^{p-1}}W^p\right)\right]}
{{\mathbb E}\left[\exp\left(\beta_{1} W + \frac{\beta_{2}}{n^{p-1}}W^p\right)\right]}
\\
&\qquad\qquad\qquad
- \left(\frac{{\mathbb E}\left[W\exp\left(\beta_{1} W + \frac{\beta_{2}}{n^{p-1}}W^p\right)\right]}
{{\mathbb E}\left[\exp\left(\beta_{1} W + \frac{\beta_{2}}{n^{p-1}}W^p\right)\right]}\right)^2\Bigg\}.
\nonumber
\end{align}
We use Proposition~\ref{E} and Proposition~\ref{laplace} 
to estimate each of the terms in~\eqref{2nd_deriv}.

We first consider the case on the transition curve 
excluding the critical point. By Theorem~\ref{trans_curve}, there are
two global maximizers $x_{1}^{\ast}< x_{2}^{\ast}$ of $\ell$. 
Let us write $\ell(x_{1}^{\ast})=\ell(x_{2}^{\ast})=\ell(x^{\ast})$.
By Proposition \ref{laplace} and Proposition \ref{E}, for any $r<1$, we have
\begin{align}\label{Moments}
&{\mathbb E}\left[W^{k}\exp\left(\beta_{1} W + \frac{\beta_{2}}{n^{p-1}}W^p\right)\right]
\\
&=\left[1+O(n^{\frac{1}{2}-r})\right]
\frac{n^{k}2^{-n}\sqrt{n}}{\sqrt{2\pi}}\int_{0}^{1}\sqrt{\frac{x^{2k}}{x(1-x)}}e^{n\ell(x)}dx
\nonumber
\\
&=\left[1+O(n^{\frac{1}{2}-r})\right]
\frac{n^{k}2^{-n}\sqrt{n}}{\sqrt{2\pi}}\frac{e^{n\ell(x^{\ast})}}{\sqrt{n}}
\left[\frac{\sqrt{\frac{(x_{1}^{\ast})^{2k}}{x_{1}^{\ast}(1-x_{1}^{\ast})}}}
{\sqrt{2\pi\ell''(x_{1}^{\ast})}}
+\frac{\sqrt{\frac{(x_{2}^{\ast})^{2k}}{x_{2}^{\ast}(1-x_{2}^{\ast})}}}
{\sqrt{2\pi\ell''(x_{2}^{\ast})}}+O(n^{-1})\right]
\nonumber
\\
&=
\frac{n^{k}2^{-n}e^{n\ell(x^{\ast})}}{2\pi}
\left[\frac{(x_{1}^{\ast})^{k}}
{\sqrt{x_{1}^{\ast}(1-x_{1}^{\ast})\ell''(x_{1}^{\ast})}}
+\frac{(x_{2}^{\ast})^{k}}
{\sqrt{x_{2}^{\ast}(1-x_{2}^{\ast})\ell''(x_{2}^{\ast})}}+O(n^{\frac{1}{2}-r})\right].
\nonumber
\end{align}
Hence,
\begin{align}
&\frac{\partial^{2}}{\partial\beta_{1}^{2}}\psi_{n}(\beta_{1},\beta_{2})
\\
&=n^{-1}n^{2}\frac{\frac{(x_{1}^{\ast})^{2}}
{\sqrt{x_{1}^{\ast}(1-x_{1}^{\ast})|\ell''(x_{1}^{\ast})|}}
+\frac{(x_{2}^{\ast})^{2}}
{\sqrt{x_{2}^{\ast}(1-x_{2}^{\ast})|\ell''(x_{2}^{\ast})|}}}
{\frac{1}
{\sqrt{x_{1}^{\ast}(1-x_{1}^{\ast})|\ell''(x_{1}^{\ast})|}}
+\frac{1}
{\sqrt{x_{2}^{\ast}(1-x_{2}^{\ast})|\ell''(x_{2}^{\ast})|}}}
\nonumber
\\
&\qquad\qquad\qquad\qquad
-n^{-1}n^{2}
\frac{\left(\frac{x_{1}^{\ast}}
{\sqrt{x_{1}^{\ast}(1-x_{1}^{\ast})|\ell''(x_{1}^{\ast})|}}
+\frac{x_{2}^{\ast}}
{\sqrt{x_{2}^{\ast}(1-x_{2}^{\ast})|\ell''(x_{2}^{\ast})|}}\right)^{2}}
{\left(\frac{1}
{\sqrt{x_{1}^{\ast}(1-x_{1}^{\ast})|\ell''(x_{1}^{\ast})|}}
+\frac{1}
{\sqrt{x_{2}^{\ast}(1-x_{2}^{\ast})|\ell''(x_{2}^{\ast})|}}\right)^{2}}+O(n^{\frac{3}{2}-r})
\nonumber
\\
&=
n
\frac{\frac{(x_{1}^{\ast}-x_{2}^{\ast})^{2}}{\sqrt{x_{1}^{\ast}(1-x_{1}^{\ast})|\ell''(x_{1}^{\ast})|}
\sqrt{x_{2}^{\ast}(1-x_{2}^{\ast})|\ell''(x_{2}^{\ast})|}}}
{\left(\frac{1}
{\sqrt{x_{1}^{\ast}(1-x_{1}^{\ast})|\ell''(x_{1}^{\ast})|}}
+\frac{1}
{\sqrt{x_{2}^{\ast}(1-x_{2}^{\ast})|\ell''(x_{2}^{\ast})|}}\right)^{2}}
+O(n^{\frac{3}{2}-r})
\nonumber
\\
&=
n
\frac{(x_{1}^{\ast}-x_{2}^{\ast})^{2}\sqrt{x_{1}^{\ast}(1-x_{1}^{\ast})|\ell''(x_{1}^{\ast})|}
\sqrt{x_{2}^{\ast}(1-x_{2}^{\ast})|\ell''(x_{2}^{\ast})|}}
{\left(\sqrt{x_{1}^{\ast}(1-x_{1}^{\ast})|\ell''(x_{1}^{\ast})|}
+\sqrt{x_{2}^{\ast}(1-x_{2}^{\ast})|\ell''(x_{2}^{\ast})|}\right)^{2}}
+O(n^{\frac{3}{2}-r}).
\nonumber
\end{align}
Next consider the case at the critical point. 
By Proposition \ref{laplace} and Proposition \ref{E}, 
for any $r<1$,
\begin{align}
&{\mathbb E}\left[W^{k}\exp\left(\beta_{1} W + \frac{\beta_{2}}{n^{p-1}}W^p\right)\right]
\\
&=\left[1+O(n^{\frac{1}{4}-r})\right]
\frac{n^{k}2^{-n}\sqrt{n}}{\sqrt{2\pi}}\int_{0}^{1}\sqrt{\frac{x^{2k}}{x(1-x)}}e^{n\ell(x)}dx
\nonumber
\\
&=
\frac{n^{k}2^{-n}\sqrt{n}}{\sqrt{2\pi}}e^{n\ell(x^{\ast})}\left[n^{-1/4}d_{0}^{(k)}\gamma_1 +
n^{-3/4}\Theta^{(k)} + O(n^{-r})\right],
\nonumber
\end{align}
where 
\begin{equation*}
\Theta^{(k)}:=d_{2}^{(k)}\gamma_3 + d_{1}^{(k)} b_5\gamma_7 + 
d_{0}^{(k)} b_6 \gamma_7 + \frac{1}{2}d_{0}^{(k)} b_5^2 \gamma_{11},
\qquad\qquad k=0,1,2.
\end{equation*}
Then
\begin{equation*}
d_{0}^{(0)}=\frac{1}{\sqrt{x^{\ast}(1-x^{\ast})}},
\qquad
d_{0}^{(1)}=\frac{x^{\ast}}{\sqrt{x^{\ast}(1-x^{\ast})}},
\qquad
d_{0}^{(2)}=\frac{(x^{\ast})^{2}}{\sqrt{x^{\ast}(1-x^{\ast})}},
\end{equation*}
\begin{equation*}
d_{1}^{(0)}=\frac{x^{\ast}-\frac{1}{2}}{(x^{\ast}(1-x^{\ast}))^{3/2}},
\qquad
d_{1}^{(1)}=\frac{\frac{x^{\ast}}{2}}{(x^{\ast}(1-x^{\ast}))^{3/2}},
\qquad
d_{1}^{(2)}=\frac{\frac{3}{2}(x^{\ast})^{2}-(x^{\ast})^{3}}{(x^{\ast}(1-x^{\ast}))^{3/2}},
\end{equation*}
\begin{equation*}
d_{2}^{(0)}=\frac{2(x^{\ast})^{2}-2x^{\ast}+\frac{3}{4}}{2(x^{\ast}(1-x^{\ast}))^{5/2}},
\quad
d_{2}^{(1)}=\frac{(x^{\ast})^{2}-\frac{x^{\ast}}{4}}{2(x^{\ast}(1-x^{\ast}))^{5/2}},
\quad
d_{2}^{(2)}=\frac{\frac{3}{4}(x^{\ast})^{2}}{2(x^{\ast}(1-x^{\ast}))^{5/2}}.
\end{equation*}
It is easy to observe that $d_{0}^{(2)}d_{0}^{(0)}=(d_{0}^{(1)})^{2}$. By differentiating
this identity, we get $d_{1}^{(2)}d_{0}^{(0)}+d_{0}^{(2)}d_{1}^{(0)}=2d_{1}^{(1)}d_{0}^{(1)}$.
Therefore, by \eqref{2nd_deriv} and \eqref{Moments},
\begin{align}
&\frac{\partial^{2}}{\partial\beta_{1}^{2}}\psi_{n}(\beta_{1},\beta_{2})
\\
&=n^{-1}n^{2}\frac{n^{-\frac{1}{4}}d_{0}^{(2)}\gamma_{1}+n^{-\frac{3}{4}}\Theta^{(2)}}
{n^{-\frac{1}{4}}d_{0}^{(0)}\gamma_{1}+n^{-\frac{3}{4}}\Theta^{(0)}}
\nonumber
\\
&\qquad\qquad\qquad\qquad
-n^{-1}n^{2}\frac{(n^{-\frac{1}{4}}d_{0}^{(1)}\gamma_{1}+n^{-\frac{3}{4}}\Theta^{(1)})^{2}}
{(n^{-\frac{1}{4}}d_{0}^{(0)}\gamma_{1}+n^{-\frac{3}{4}}\Theta^{(0)})^{2}}
+O(n^{\frac{5}{4}-r})
\nonumber
\\
&=n\frac{n^{-1}\gamma_{1}[d_{0}^{(2)}\Theta^{(0)}+d_{0}^{(0)}\Theta^{(2)}
-2d_{0}^{(1)}\Theta^{(1)}]+O(n^{-\frac{3}{2}})}
{n^{-\frac{1}{2}}(d_{0}^{(0)})^{2}\gamma_{1}^{2}}+O(n^{\frac{5}{4}-r})
\nonumber
\\
&=\frac{n^{\frac{1}{2}}}{(d_{0}^{(0)})^{2}\gamma_{1}}
\left[\gamma_{3}\left(d_{0}^{(2)}d_{2}^{(0)}+d_{0}^{(0)}d_{2}^{(2)}
-2d_{0}^{(1)}d_{2}^{(1)}\right)\right]
\nonumber
\\
&\qquad\qquad
+\frac{n^{\frac{1}{2}}}{(d_{0}^{(0)})^{2}\gamma_{1}}
\left[b_{5}\gamma_{7}\left(d_{0}^{(2)}d_{1}^{(0)}+d_{0}^{(0)}d_{1}^{(2)}
-2d_{0}^{(1)}d_{1}^{(1)}\right)\right]
+O(n^{\frac{5}{4}-r})
\nonumber
\\
&=\frac{n^{\frac{1}{2}}\gamma_{3}}{(d_{0}^{(0)})^{2}\gamma_{1}}
\left(d_{0}^{(2)}d_{2}^{(0)}+d_{0}^{(0)}d_{2}^{(2)}
-2d_{0}^{(1)}d_{2}^{(1)}\right)
+O(n^{\frac{5}{4}-r})
\nonumber
\\
&=n^{\frac{1}{2}}\frac{\gamma_{3}}{\gamma_{1}}
+O(n^{\frac{5}{4}-r})
\nonumber
\\
&=n^{\frac{1}{2}}\frac{\Gamma(\frac{3}{4})}{\Gamma(\frac{1}{4})}
\frac{1}{\sqrt{\frac{\ell^{(4)}(x^{\ast})}{4!}}}
+O(n^{\frac{5}{4}-r})
\nonumber
=n^{\frac{1}{2}}\frac{\Gamma(\frac{3}{4})}{\Gamma(\frac{1}{4})}
\frac{2\sqrt{6}(p-1)}{p^{5/2}}
+O(n^{\frac{5}{4}-r}),
\nonumber
\end{align}
where we used Proposition \ref{order} in the last line.
\end{proof}

\begin{proof}[Proof of Theorem~\ref{starvariance}]
We prove only the last two displays in Theorem~\ref{starvariance}, 
since the first display follows immediately from 
Theorem~\ref{free_energy} and results in~\cite{Radin}.
>From the second line of~\eqref{long}, we have
\begin{align}
\frac{\partial^2}{\partial\beta_{2}^{2}}\psi_n(\beta_{1},\beta_{2})
&=n^{-1}\Bigg\{\frac{{\mathbb E}\left[\frac{W^{2p}}{n^{2(p-1)}}\exp\left(\beta_{1} W + \frac{\beta_{2}}{n^{p-1}}W^p\right)\right]}
{{\mathbb E}\left[\exp\left(\beta_{1} W + \frac{\beta_{2}}{n^{p-1}}W^p\right)\right]}
\\
&\qquad\qquad\qquad
- \left(\frac{{\mathbb E}\left[\frac{W^{p}}{n^{p-1}}\exp\left(\beta_{1} W + \frac{\beta_{2}}{n^{p-1}}W^p\right)\right]}
{{\mathbb E}\left[\exp\left(\beta_{1} W + \frac{\beta_{2}}{n^{p-1}}W^p\right)\right]}\right)^2\Bigg\}.
\nonumber
\end{align}
Consider first the case on the phase transition curve excluding the critical point. 
Then, similar to the proof of Theorem \ref{MainThm}, for any $r<1$, 
\begin{align*}
\frac{\partial^{2}}{\partial\beta_{2}^{2}}\psi_{n}(\beta_{1},\beta_{2})
&=n\frac{((x_{1}^{\ast})^{p}-(x_{2}^{\ast})^{p})^{2}\sqrt{x_{1}^{\ast}(1-x_{1}^{\ast})|\ell''(x_{1}^{\ast})|}
\sqrt{x_{2}^{\ast}(1-x_{2}^{\ast})|\ell''(x_{2}^{\ast})|}}
{\left(\sqrt{x_{1}^{\ast}(1-x_{1}^{\ast})|\ell''(x_{1}^{\ast})|}
+\sqrt{x_{2}^{\ast}(1-x_{2}^{\ast})|\ell''(x_{2}^{\ast})|}\right)^{2}}
\\
&\qquad\qquad\qquad\qquad\qquad
+O(n^{\frac{3}{2}-r}).
\end{align*}
Now consider the case at the critical point. We have 
\begin{align*}
&d_{0}^{(p)}=\frac{(x^{\ast})^{p}}{\sqrt{x^{\ast}(1-x^{\ast})}},
\\
&d_{1}^{(p)}=\frac{(p-\frac{1}{2})(x^{\ast})^{p}-(p-1)(x^{\ast})^{p+1}}{(x^{\ast}(1-x^{\ast}))^{3/2}},
\\
&d_{2}^{(p)}=\frac{(p^{2}-2p+\frac{3}{4})(x^{\ast})^{p}
-(2p^{2}-5p+2)(x^{\ast})^{p+1}
+(p^{2}-3p+2)(x^{\ast})^{p+2}}{2(x^{\ast}(1-x^{\ast}))^{5/2}}.
\end{align*}
It is easy to observe that $d_{0}^{(2p)}d_{0}^{(0)}=(d_{0}^{(p)})^{2}$. By differentiating
this identity, we get $d_{1}^{(2p)}d_{0}^{(0)}+d_{0}^{(2p)}d_{1}^{(0)}=2d_{1}^{(p)}d_{0}^{(p)}$.
Similar to the proof of Theorem \ref{MainThm}, for any $r<1$,
\begin{align}
&\frac{\partial^{2}}{\partial\beta_{1}^{2}}\psi_{n}(\beta_{1},\beta_{2})
\nonumber
\\
&=n\frac{(n^{-\frac{1}{4}}d_{0}^{(2p)}\gamma_{1}+n^{-\frac{3}{4}}\Theta^{(2p)})
(n^{-\frac{1}{4}}d_{0}^{(0)}\gamma_{1}+n^{-\frac{3}{4}}\Theta^{(0)})
-(n^{-\frac{1}{4}}d_{0}^{(p)}\gamma_{1}+n^{-\frac{3}{4}}\Theta^{(p)})^{2}}
{(n^{-\frac{1}{4}}d_{0}^{(0)}\gamma_{1}+n^{-\frac{3}{4}}\Theta^{(0)})^{2}}
\nonumber
\\
&\qquad\qquad\qquad\qquad\qquad
+O(n^{\frac{5}{4}-r})
\nonumber
\\
&=\frac{n^{\frac{1}{2}}\gamma_{3}}{(d_{0}^{(0)})^{2}\gamma_{1}}
\left(d_{0}^{(2p)}d_{2}^{(0)}+d_{0}^{(0)}d_{2}^{(2p)}
-2d_{0}^{(p)}d_{2}^{(p)}\right)
+O(n^{\frac{5}{4}-r})
\nonumber
\\
&=p^{2}(x^{\ast})^{2p-2}\frac{\gamma_{3}}{\gamma_{1}}n^{1/2}
+O(n^{\frac{5}{4}-r})
\nonumber
\\
&=n^{\frac{1}{2}}p^{2}\left(\frac{p-1}{p}\right)^{2p-2}
\frac{\Gamma(\frac{3}{4})}{\Gamma(\frac{1}{4})}
\frac{2\sqrt{6}(p-1)}{p^{5/2}}
+O(n^{\frac{5}{4}-r}).
\nonumber
\end{align}
\end{proof}

\begin{proof}[Proof of Theorem~\ref{covariance}]
Again we prove only the last two displays in the theorem.
>From the second line of~\eqref{long}, we have
\begin{align}
&\frac{\partial^2}{\partial\beta_{1}\partial\beta_{2}}\psi_n(\beta_{1},\beta_{2})
\\
&=n^{-1}\frac{{\mathbb E}\left[W\frac{W^{p}}{n^{(p-1)}}\exp\left(\beta_{1} W + \frac{\beta_{2}}{n^{p-1}}W^p\right)\right]}
{{\mathbb E}\left[\exp\left(\beta_{1} W + \frac{\beta_{2}}{n^{p-1}}W^p\right)\right]}
\nonumber
\\
&\qquad
-\frac{{{\mathbb E}\left[W\exp\left(\beta_{1} W + \frac{\beta_{2}}{n^{p-1}}W^p\right)\right]}
{\mathbb E}\left[\frac{W^{p}}{n^{p-1}}
\exp\left(\beta_{1} W + \frac{\beta_{2}}{n^{p-1}}W^p\right)\right]}
{\left({\mathbb E}\left[\exp\left(\beta_{1} W + \frac{\beta_{2}}{n^{p-1}}W^p\right)\right]\right)^{2}}.
\nonumber
\end{align}
Similar to the proof of Theorem \ref{MainThm},
on the phase transition curve excluding the critical point, for any $r<1$,
\begin{align*}
&\frac{\partial^2 }{\partial \beta_1^2}\psi_n(\beta_1,\beta_2)
\\
&=
n\frac{((x_{1}^{\ast})^{p}-(x_{2}^{\ast})^{p})(x_{1}^{\ast}-x_{2}^{\ast})
\sqrt{x_{1}^{\ast}(1-x_{1}^{\ast})|\ell''(x_{1}^{\ast})|}
\sqrt{x_{2}^{\ast}(1-x_{2}^{\ast})|\ell''(x_{2}^{\ast})|}}
{\left(\sqrt{x_{1}^{\ast}(1-x_{1}^{\ast})|\ell''(x_{1}^{\ast})|}
+\sqrt{x_{2}^{\ast}(1-x_{2}^{\ast})|\ell''(x_{2}^{\ast})|}\right)^{2}}
+O(n^{\frac{3}{2}-r}).
\end{align*}
Consider now the case at the critical point. 
It is easy to observe that $d_{0}^{(p+1)}d_{0}^{(0)}=(d_{0}^{(1)})(d_{0}^{(p)})$. By differentiating
this identity, we get $d_{1}^{(p+1)}d_{0}^{(0)}+d_{0}^{(p+1)}d_{1}^{(0)}=d_{1}^{(1)}d_{0}^{(p)}
+d_{0}^{(1)}d_{1}^{(p)}$.
Therefore, similar to the proof of Theorem \ref{MainThm}, we get for any $r<1$,
\begin{align}
&\frac{\partial^{2}}{\partial\beta_{1}^{2}}\psi_{n}(\beta_{1},\beta_{2})
\nonumber
\\
&=n\frac{(n^{-\frac{1}{4}}d_{0}^{(p+1)}\gamma_{1}+n^{-\frac{3}{4}}\Theta^{(p+1)})
(n^{-\frac{1}{4}}d_{0}^{(0)}\gamma_{1}+n^{-\frac{3}{4}}\Theta^{(0)})}
{(n^{-\frac{1}{4}}d_{0}^{(0)}\gamma_{1}+n^{-\frac{3}{4}}\Theta^{(0)})^{2}}
\\
&\qquad\qquad
-n\frac{(n^{-\frac{1}{4}}d_{0}^{(1)}\gamma_{1}+n^{-\frac{3}{4}}\Theta^{(1)})
(n^{-\frac{1}{4}}d_{0}^{(p)}\gamma_{1}+n^{-\frac{3}{4}}\Theta^{(p)})}
{(n^{-\frac{1}{4}}d_{0}^{(0)}\gamma_{1}+n^{-\frac{3}{4}}\Theta^{(0)})^{2}}
+O(n^{\frac{5}{4}-r})
\nonumber
\\
&=n\frac{n^{-1}\gamma_{1}[d_{0}^{(p+1)}\Theta^{(0)}+d_{0}^{(0)}\Theta^{(p+1)}
-d_{0}^{(1)}\Theta^{(p)}-d_{0}^{(p)}\Theta^{(1)}]+O(n^{-\frac{3}{2}})}
{n^{-\frac{1}{2}}(d_{0}^{(0)})^{2}\gamma_{1}^{2}+O(n^{-1})}
+O(n^{\frac{5}{4}-r})
\nonumber
\\
&=\frac{n^{\frac{1}{2}}\gamma_{3}}{(d_{0}^{(0)})^{2}\gamma_{1}}
\left(d_{0}^{(p+1)}d_{2}^{(0)}+d_{0}^{(0)}d_{2}^{(p+1)}
-d_{0}^{(1)}d_{2}^{(p)}-d_{0}^{(p)}d_{2}^{(1)}\right)
+O(n^{\frac{5}{4}-r})
\nonumber
\\
&=p(x^{\ast})^{p-1}\frac{\gamma_{3}}{\gamma_{1}}n^{1/2}+O(n^{\frac{5}{4}-r})
\nonumber
\\
&=p\left(\frac{p-1}{p}\right)^{p-1}\frac{\Gamma(\frac{3}{4})}{\Gamma(\frac{1}{4})}
\frac{2\sqrt{6}(p-1)}{p^{5/2}}n^{1/2}+O(n^{\frac{5}{4}-r}).
\nonumber
\end{align}
\end{proof}

\begin{proof}[Proof of Theorem~\ref{marginaldensities}]
Observe first that $\mathbb{P}_{n}(X_{12}=1)=\mathbb{E}_{n}[X_{12}]
=\frac{1}{n}\mathbb{E}_{n}[\sum_{j=1}^{n}X_{1j}]$.
Thus, off the transition curve we have
\begin{align*}
\lim_{n\rightarrow\infty}\mathbb{P}_{n}(X_{12}=1)
&=\lim_{n\rightarrow\infty}\frac{1}{n}\mathbb{E}_{n}\left[\sum_{j=1}^{n}X_{1j}\right]
\\
&=\lim_{n\rightarrow\infty}\frac{1}{n}\frac{\mathbb{E}\left[W\exp\left(\beta_{1} W + \frac{\beta_{2}}{n^{p-1}}W^p\right)\right]}
{\mathbb{E}\left[\exp\left(\beta_{1} W + \frac{\beta_{2}}{n^{p-1}}W^p\right)\right]}
\\
&=\lim_{n\rightarrow\infty}
\frac{\left(1+O\left(n^{1/2-4q}\right)\right)
2^{-n}\sqrt{\frac{n}{2\pi}}\int_{0}^{1} 
\sqrt{\frac{x^{2}}{x(1-x)}}e^{n\ell(x)}\,dx}{\left(1+O\left(n^{1/2-4q}\right)\right)
2^{-n}\sqrt{\frac{n}{2\pi}}\int_{0}^{1} 
\sqrt{\frac{1}{x(1-x)}}e^{n\ell(x)}\,dx}
\\
&=\lim_{n\rightarrow\infty}\frac{\sqrt{\frac{2\pi(x^{\ast})^{2}}
{x^{\ast}(1-x^{\ast})|\ell''(x^{\ast})|}}n^{-\frac{1}{2}}e^{n\ell(x^{\ast})}}
{\sqrt{\frac{2\pi}{x^{\ast}(1-x^{\ast})|\ell''(x^{\ast})|}}n^{-\frac{1}{2}}e^{n\ell(x^{\ast})}}
\\
&=x^{\ast}.
\end{align*}
Similarly, at the critical point, 
\begin{align*}
\lim_{n\rightarrow\infty}\mathbb{P}_{n}(X_{12}=1)
&=\lim_{n\rightarrow\infty}\frac{1}{n}\frac{\mathbb{E}\left[W\exp\left(\beta_{1} W + \frac{\beta_{2}}{n^{p-1}}W^p\right)\right]}
{\mathbb{E}\left[\exp\left(\beta_{1} W + \frac{\beta_{2}}{n^{p-1}}W^p\right)\right]}
\\
&=\lim_{n\rightarrow\infty}
\frac{\left(1+O\left(n^{1/4-4q}\right)\right)
2^{-n}\sqrt{\frac{n}{2\pi}}\int_{0}^{1} 
\sqrt{\frac{x^{2}}{x(1-x)}}e^{n\ell(x)}\,dx}{\left(1+O\left(n^{1/4-4q}\right)\right)
2^{-n}\sqrt{\frac{n}{2\pi}}\int_{0}^{1} 
\sqrt{\frac{1}{x(1-x)}}e^{n\ell(x)}\,dx}
\\
&=\lim_{n\rightarrow\infty}\frac{e^{n\ell(x^{\ast})}n^{-\frac{1}{4}}d_{0}^{(1)}\gamma_{1}}
{e^{n\ell(x^{\ast})}n^{-\frac{1}{4}}d_{0}^{(0)}\gamma_{1}}
\\
&=x^{\ast}.
\end{align*}
Finally, on the phase transition curve except at the critical point,
\begin{align*}
\lim_{n\rightarrow\infty}\mathbb{P}_{n}(X_{12}=1)
&=\lim_{n\rightarrow\infty}\frac{1}{n}\frac{\mathbb{E}\left[W\exp\left(\beta_{1} W + \frac{\beta_{2}}{n^{p-1}}W^p\right)\right]}
{\mathbb{E}\left[\exp\left(\beta_{1} W + \frac{\beta_{2}}{n^{p-1}}W^p\right)\right]}
\\
&=\lim_{n\rightarrow\infty}\frac{\left(\sqrt{\frac{2\pi(x_{1}^{\ast})^{2}}
{x_{1}^{\ast}(1-x_{1}^{\ast})|\ell''(x_{1}^{\ast})|}}
+\sqrt{\frac{2\pi(x_{2}^{\ast})^{2}}
{x_{2}^{\ast}(1-x_{2}^{\ast})|\ell''(x_{2}^{\ast})|}}\right)n^{-\frac{1}{2}}e^{n\ell(x^{\ast})}}
{\left(\sqrt{\frac{2\pi}{x_{1}^{\ast}(1-x_{1}^{\ast})|\ell''(x_{1}^{\ast})|}}
+\sqrt{\frac{2\pi}{x_{2}^{\ast}(1-x_{2}^{\ast})|\ell''(x_{2}^{\ast})|}}\right)n^{-\frac{1}{2}}e^{n\ell(x^{\ast})}}
\\
&=\frac{x_{1}^{\ast}\sqrt{\frac{1}
{x_{1}^{\ast}(1-x_{1}^{\ast})|\ell''(x_{1}^{\ast})|}}
+x_{2}^{\ast}\sqrt{\frac{1}
{x_{2}^{\ast}(1-x_{2}^{\ast})|\ell''(x_{2}^{\ast})|}}}
{\sqrt{\frac{1}{x_{1}^{\ast}(1-x_{1}^{\ast})|\ell''(x_{1}^{\ast})|}}
+\sqrt{\frac{1}{x_{2}^{\ast}(1-x_{2}^{\ast})|\ell''(x_{2}^{\ast})|}}}.
\end{align*}
\end{proof}

\begin{proof}[Proof of Theorem~\ref{marginaldensities2}]
(i) The results follow from $\lim_{n\rightarrow\infty}\mathbb{P}_{n}(X_{12}=1,X_{34}=1)
=\lim_{n\rightarrow\infty}\mathbb{P}_{n}(X_{12}=1)\lim_{n\rightarrow\infty}\mathbb{P}_{n}(X_{34}=1)$.

(ii) Note that $\mathbb{P}_{n}(X_{12}=X_{13}=1)=\mathbb{E}_{n}[X_{12}X_{13}]$, and
\begin{equation*}
\mathbb{E}_{n}[X_{12}X_{13}]
=\frac{1}{n^{2}-n}\left(\mathbb{E}_{n}\left[\left(\sum_{j=1}^{n}X_{1j}\right)^{2}\right]
-\mathbb{E}_{n}\left[\sum_{j=1}^{n}X_{1j}\right]\right).
\end{equation*}
It follows that off the transition curve we have
\begin{align*}
\lim_{n\rightarrow\infty}\mathbb{P}_{n}(X_{12}=X_{13}=1)
&=\lim_{n\rightarrow\infty}\frac{1}{n^{2}}\mathbb{E}_{n}\left[\left(\sum_{j=1}^{n}X_{1j}\right)^{2}\right]
\\
&=\lim_{n\rightarrow\infty}\frac{1}{n^{2}}\frac{\mathbb{E}[W^{2}\exp(\beta_{1}W+\frac{\beta_{2}}{n^{p-1}}W^{p})]}
{\mathbb{E}[\exp(\beta_{1}W+\frac{\beta_{2}}{n^{p-1}}W^{p})]}
\\
&=\lim_{n\rightarrow\infty}\frac{1}{n^{2}}
\frac{\frac{n^{2}2^{-n}\sqrt{n}}{\sqrt{2\pi}}\int_{0}^{1}\sqrt{\frac{x^{4}}{x(1-x)}}e^{n\ell(x)}dx}
{\frac{2^{-n}\sqrt{n}}{\sqrt{2\pi}}\int_{0}^{1}\sqrt{\frac{1}{x(1-x)}}e^{n\ell(x)}dx}=(x^{\ast})^{2}.
\end{align*}
Similarly, at the critical point, 
\begin{align*}
\lim_{n\rightarrow\infty}\mathbb{P}_{n}(X_{12}=X_{13}=1)
&=\lim_{n\rightarrow\infty}\frac{1}{n^{2}}\frac{\mathbb{E}[W^{2}\exp(\beta_{1}W+\frac{\beta_{2}}{n^{p-1}}W^{p})]}
{\mathbb{E}[\exp(\beta_{1}W+\frac{\beta_{2}}{n^{p-1}}W^{p})]}
\\
&=\lim_{n\rightarrow\infty}
\frac{\int_{0}^{1}\sqrt{\frac{x^{4}}{x(1-x)}}e^{n\ell(x)}dx}
{\int_{0}^{1}\sqrt{\frac{1}{x(1-x)}}e^{n\ell(x)}dx}
=\frac{d_{0}^{(2)}\gamma_{1}}{d_{0}^{(0)}\gamma_{1}}=(x^{\ast})^{2}.
\end{align*}

Finally, on the phase transition curve except at the critical point,
\begin{align*}
\lim_{n\rightarrow\infty}\mathbb{P}_{n}(X_{12}=X_{13}=1)
&=\lim_{n\rightarrow\infty}\frac{1}{n^{2}}\frac{\mathbb{E}[W^{2}\exp(\beta_{1}W+\frac{\beta_{2}}{n^{p-1}}W^{p})]}
{\mathbb{E}[\exp(\beta_{1}W+\frac{\beta_{2}}{n^{p-1}}W^{p})]}
\\
&=\lim_{n\rightarrow\infty}
\frac{\int_{0}^{1}\sqrt{\frac{x^{4}}{x(1-x)}}e^{n\ell(x)}dx}
{\int_{0}^{1}\sqrt{\frac{1}{x(1-x)}}e^{n\ell(x)}dx}
\\
&=\frac{\frac{(x_{1}^{\ast})^{2}}
{\sqrt{x_{1}^{\ast}(1-x_{1}^{\ast})\ell''(x_{1}^{\ast})}}
+\frac{(x_{2}^{\ast})^{2}}
{\sqrt{x_{2}^{\ast}(1-x_{2}^{\ast})\ell''(x_{2}^{\ast})}}}
{\frac{1}
{\sqrt{x_{1}^{\ast}(1-x_{1}^{\ast})\ell''(x_{1}^{\ast})}}
+\frac{1}
{\sqrt{x_{2}^{\ast}(1-x_{2}^{\ast})\ell''(x_{2}^{\ast})}}}
\\
&=\alpha(x_{1}^{\ast})^{2}+(1-\alpha)(x_{2}^{\ast})^{2}.
\end{align*}
\end{proof}

\section*{Acknowledgements}

The authors are grateful to the Editor and an anonymous referee for helpful comments and suggestions.
D. Aristoff and L. Zhu acknowledge support from the National Science Foundation via the awards
NSF-DMS-1522398 and NSF-DMS-1613164. 
The authors are very grateful to Angelo Mele, Dan Pirjol, Charles Radin, Mei Yin and S. R. S. Varadhan 
for helpful discussions.

\end{document}